\journal{ARS Mathematica Contemporanea}
\newtheorem{thm}{Theorem}[section]
\newtheorem{prop}[thm]{Proposition}
\newtheorem{lem}[thm]{Lemma}
\newtheorem{cor}[thm]{Corollary}
\newproof{proof}[rem]{Proof}
\newcommand{\forme}[1]{}
\newcommand{\gn}[1]{\langle {#1}\rangle}
\newcommand{\Qd}{/\!\!/}
\newcommand{\Ot}{{\mathbf O}_{\mathrm \theta}}
\newcommand{\Or}{{\mathbf O}^{\mathrm \theta}}
\begin{document}

\begin{frontmatter}

\title{Association schemes with a certain type of $p$-subschemes}


\author{Abbas Wasim\corref{mycorrespondingauthor}}
\ead{wasimabbas@pusan.ac.kr}
\author{Mitsugu Hirasaka }
\address{Department of Mathematics, College of Sciences, Pusan National University,
63 Beon-gil 2, Busandaehag-ro, Geumjung-gu, Busan 609-735, Korea.}


\cortext[mycorrespondingauthor]{Corresponding author}

\fntext[1]{~\ This work was supported by BK21 Dynamic Math Center, Department of Mathematics at Pusan National University.}

\begin{abstract}
	In this article, we focus on association schemes with some properties derived
	from the orbitals of a transitive permutation group $G$ with a one-point stabilizer $H$
	satisfying $H <N_G(H)<N_G(N_G(H))\unlhd G$ and $|N_G(N_G(H))|=p^3$
	where $p$ is a  prime. 
	By a corollary of our main result we obtain some inequality
	which corresponds to the fact $|G:N_G(N_G(H))|\leq p+1$.
\end{abstract}

\end{frontmatter}   

\section{Introduction}
Let $G$ be a finite group with a subgroup $H$ which satisfies
\begin{equation}\label{eq:0}
H <N_G(H)<N_G(N_G(H))\unlhd G\mbox{ and } |N_G(N_G(H))|=p^3
\end{equation}
where $p$ is a prime.
In this article we focus on association schemes axiomatizing
some properties derived from the orbitals of the action of $G$ on $G/H$.

We shall recall some facts on coherent configurations obtained from a permutation group.
Let $G$ be a permutation group of a finite set $\Omega$.
Then $G$ acts on $\Omega\times \Omega$ by its entry-wise action, i.e.,
\[(\alpha,\beta)^x:=(\alpha^x,\beta^x) \mbox{ for $\alpha,\beta\in \Omega$ and $x\in G$.}\]
We denote the set of orbits of the action of $G$ on $\Omega\times \Omega$ by $ \mathrm{Orb}{_2}(G) $ and $\mathrm{Inv}(G) =(\Omega, \mathrm{Orb}{_2}(G))$, which satisfies the following conditions:
\begin{enumerate}
\item The diagonal relation $1_\Omega$ is a union of elements of $ \mathrm{Orb}{_2}(G) $;
\item For each $s\in$ $ \mathrm{Orb}{_2}(G) $ we have $s^\ast \in$ $ \mathrm{Orb}{_2}(G) $\\ where $s^\ast:=\{(\alpha,\beta)\mid (\beta,\alpha)\in s\}$;
\item For all $s,t,u\in$ $ \mathrm{Orb}{_2}(G) $ we have $\sigma_s\sigma_t=\sum_{u\in S}c_{st}^u\sigma_u$
for $c_{st}^u\in \mathbb{N}$ uniquely determined by $s,t,u$
where $\sigma_u$
is the adjacency matrix of $u$,
i.e., $(\sigma_u)_{\alpha,\beta}=1$ if $(\alpha,\beta)\in u $ and  $(\sigma_u)_{\alpha,\beta}=0$ if $(\alpha,\beta)\not\in u $.
\end{enumerate}
A \textit{coherent configuration} is a pair $(\Omega,S)$ of a finite set $\Omega$ 
and a partition $S$ of $\Omega\times \Omega$
which satisfies the conditions obtained from the above by replacing $ \mathrm{Orb}{_2}(G) $ by $S$.
We say that a coherent configuration $(\Omega, S)$ is \textit{schurian} if $S=$  $ \mathrm{Orb}{_2}(G) $ for some permutation group $G$ of $\Omega$, and it is \textit{homogeneous} or
an \textit{association scheme} if $1_\Omega\in S$ (see \cite{BI} and \cite{BCN} for its background).

Suppose that $G$ has a subgroup $H$ which satisfies (\ref{eq:0}).
Then $|H|=p$, $|N_G(H)|=p^2$ and for each $g\in G$ we have the following:
\begin{enumerate}
\item  $|HgH|/|H|\in \{ 1, p\}$ and $|N_G(H)g N_G(H)|/|N_G(H)|\in \{1,p\}$;
\item $|HgH|/|H|=1$ if and only if $g\in N_G(H)$;
\item $|N_G(H)gN_G(H)|/|N_G(H)|=1$ if and only if $g\in N_G(N_G(H))$;
\item $N_G(N_G(H))$ is the smallest normal subgroup of $G$ containing $H$. 
\end{enumerate}
Since $G$ acts faithfully and transitively on the set of right cosets of $H$ in $G$
by its right multiplication, it induces a
schurian association scheme $(\Omega,S)$
where $\Omega=\{Hx\mid x\in G\}$ and $S=$ $ \mathrm{Orb}_2(G) $ such that,
for each $s\in S$ we have the following:
\begin{enumerate}
\item $n_s\in \{1,p\}$ where $n_s:=c_{ss^\ast}^{1_\Omega}$;
\item $\Ot(S)$ forms a group of order $p$ where $\Ot(S):=\{s\in S\mid n_s=1\}$;
\item $\Or(S)=\{s\in S\mid ss^\ast s=s\}$ where $\Or(S)$ is the thin residue of $S$
(see Section~2, \cite{Z1} or \cite{Z2} for its definition).
\end{enumerate}

The following is our main result:
\begin{thm}\label{main1}
Let $(\Omega,S)$ be an association scheme with $\Ot(S)<\Or(S)$
such that $n_s\in \{1,p\}$ for each $s\in S$ and $n_{\Or(S)}=p^2$
where $p$ is a prime.
Then $|\Omega|\leq p^2(p+1)$.
\end{thm}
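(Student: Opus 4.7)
The plan is to mirror the group-theoretic bound $|G : N_G(N_G(H))| \le p+1$. In that setting the bound arises from counting order-$p$ subgroups in a non-abelian $p^3$-group $N$: at most $p(p+1)$ such subgroups differ from $Z(N)$, so $|G{:}N_G(H)| \le p(p+1)$ and $|G{:}N| = |G{:}N_G(H)|/p \le p+1$.

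First I would pin down the internal structure of $T := \Or(S)$. Writing $a = |\Ot(S)|$, the fact that $a$ divides $n_T = p^2$, is strictly less than $p^2$ (from $\Ot(S) < \Or(S)$), and that the non-thin contributions to $n_T$ are multiples of $p$, together force $a = p$. Hence $T$ contains exactly $p-1$ non-thin elements, each of valency $p$. Next I would let $\Ot(S) \times \Ot(S)$ act on $T$ by $(t_1, t_2) \cdot s = t_1 s t_2$; orbit sizes divide $p^2$ and hence lie in $\{1, p, p^2\}$, but since only $p-1$ non-thin elements are available, every such orbit must be a singleton. Consequently each non-thin $s \in T$ is fixed by $\Ot(S) \times \Ot(S)$, so on a $T$-coset $B$ it descends to a bijection of the $p$-point quotient $B/\Ot(S)$; the induced quotient scheme on $B/\Ot(S)$ is thin of order $p$, and is therefore the cyclic group of order $p$.

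Setting $m = |\Omega|/p^2$, the theorem becomes $m \le p+1$. I would show that the quotient $S/T$ is a thin scheme of order $m$ acting regularly on the $m$ many $T$-cosets, with each non-identity class lifting to exactly $p$ valency-$p$ relations in $S$ that together partition each rectangle $B \times \bar s(B)$. To bound $m$, I would examine how each $s \in S \setminus T$ interacts with the cyclic structure $T/\Ot(S)$ on each block: the normality of $T$ in $S$ constrains which between-block partitions are compatible, and the scheme analog of counting non-central order-$p$ subgroups in $N$ should then yield $m \le p+1$.

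The main obstacle is carrying out this last counting step without relying on schurianity: the group proof uses detailed subgroup arithmetic in $N$ that has no immediate scheme-theoretic counterpart, so the bound must be extracted from computations of structure constants in the Bose--Mesner algebra. I expect the key step to be an analysis of the products $ss^*$ for $s \in S \setminus T$, showing they lie in $T$ and decompose compatibly with the cyclic structure of $T/\Ot(S)$; a double-count of triples $(\omega_1, \omega_2, \omega_3)$ carrying prescribed $S$-relations, or a dimension argument in the adjacency algebra, should then produce the inequality $m \le p+1$.
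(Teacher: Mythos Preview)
Your structural analysis of $T=\Or(S)$ is fine and agrees with what the paper needs: $|\Ot(S)|=p$, the $p-1$ non-thin elements of $T$ have valency $p$ and are fixed under left and right multiplication by $\Ot(S)$, so each $T$-coset carries the scheme $C_p\wr C_p$. But your proof stops exactly where the real difficulty begins. You explicitly flag the bound $m\le p+1$ as ``the main obstacle'' and then offer only heuristics: a scheme analogue of the subgroup count in a $p^3$-group, a double-count of triples, or an unspecified dimension argument. None of these is carried out, and there is no evidence that the group-theoretic count transfers to the non-schurian setting; indeed the paper's introduction says this case ``had no progression for the last five years,'' which should warn you that a direct structure-constant computation does not fall out.

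The paper's actual argument is quite different from anything you propose. It passes to the thin residue extension, a coherent configuration whose fibers are the $T$-cosets, each isomorphic to $C_p\wr C_p$, with all off-fiber relations of valency $p$. For a non-regular $s\in S_{ij}$ one computes $\sigma_s\sigma_{s^\ast}$ explicitly (your suggestion to study $ss^\ast$ is on target here), and this forces the restriction of $\sigma_s$ to the $\Ot$-quotient to be encoded by a matrix $H(s)=(\xi^{h(s)_{kl}})\in M_p(\mathbb{C})$ that is a complex Hadamard matrix of Butson type. A further computation of $\sigma_{s_1}\sigma_{s_2}$ across three distinct fibers shows $H(s_1)H(s_2)=\alpha H(s_3)$ with $|\alpha|=\sqrt{p}$, so the row-bases of the $H(s_i)$ together with the standard basis form a system of mutually unbiased bases in $\mathbb{C}^p$. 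The classical MUB bound then gives $m\le p+1$. This Hadamard/MUB mechanism is the missing idea; without it, your sketch has no path from the block structure to the inequality.
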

In \cite{CHK} one can find a criterion on association schemes
whose thin residue $\Or(S)$
induces the subschemes isomorphic to either
\[\mbox{$C_{p^2}$, $C_p\times C_p$ or $C_p\wr C_p$.}\]
Here we denote $(G,\mathrm{Inv}(G))$ by $G$ when $G$ acts on itself by its right multiplication and
we denote the wreath product of one scheme $(\Delta,U)$ by another scheme $(\Gamma,V)$ by $(\Delta,U)\wr (\Gamma,V)$, i.e.,
\[(\Delta,U)\wr (\Gamma,V):=(\Delta\times \Gamma, \{ 1_\Gamma\otimes u\mid u\in U\}\cup \{v\otimes U\mid v \in V\setminus \{1_\Gamma\}\})\]
where
\[1_\Gamma\otimes u:=\{((\delta_1,\gamma),(\delta_2,\gamma))\mid (\delta_1,\delta_2)\in u,\gamma\in \Gamma\}\] and
\[v\otimes U:=\{((\delta_1,\gamma_1),(\delta_2,\gamma_2))\mid \delta_1,\delta_2\in \Delta ,(\gamma_1,\gamma_2)\in v\}.\]
For the case of $\Or(S)\simeq C_{p^2}$ we can apply the main result in \cite{HZ} to conclude that $(\Omega,S)$ is schurian.
For the case of $\Or(S)\simeq C_p\times C_p$ we can say that $|\Omega|\leq p^2(p^2+p+1)$ under the assumption that $n_s=p$
for each $s\in S\setminus \Or(S)$.
For the case of $\Or(S)\simeq C_p\wr C_p$ we had no progression for the last five years.

In \cite{HM} all association schemes of degree 27 are
classified by computational enumeration, and there are three pairs of
non-isomorphic association schemes with $\Or(S)\simeq C_3\wr C_3$ which are algebraically isomorphic.
These examples had given an impression that we need some complicated combinatorial argument to enumerate $p$-schemes $(\Omega,S)$ with $\Or(S)\simeq C_p\wr C_p$
and $\{n_s\mid s\in S\setminus \Or(S)\}=\{p\}$. The following statement reduces the class of $ p $-schemes to those of degree $p^3$ where an association scheme $(\Omega,S)$
is called a $p$-scheme if $|s|$ is a power of $p$ for each $s\in S$:
\begin{cor}\label{cor:2}
For each $p$-scheme $(\Omega,S)$ with $\Or(S)\simeq C_p\wr C_p$,
if  $n_s=p$ for each $s\in S\setminus \Or(S)$, then $|\Omega|=p^3$.
\end{cor}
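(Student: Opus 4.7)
The plan is to combine Theorem~\ref{main1} with the fact that $|\Omega|$ is a prime power, and then eliminate the remaining borderline case by a direct computation inside $C_p\wr C_p$.

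First I would verify that the hypotheses of Theorem~\ref{main1} are satisfied. The wreath product scheme $C_p\wr C_p$ has degree $p^2$, so $\Or(S)\simeq C_p\wr C_p$ gives $n_{\Or(S)}=p^2$, and unpacking the definition of the wreath product shows that each relation of $C_p\wr C_p$ has valency either $1$ (for the ``base'' relations $1_\Gamma\otimes u$) or $p$ (for the relations $v\otimes U$ with $v\neq 1_\Gamma$). Together with the assumption $n_s=p$ for every $s\in S\setminus\Or(S)$, this yields $n_s\in\{1,p\}$ for every $s\in S$. Since any thin element of $S$ has valency $1$ and therefore must lie in $\Or(S)$, and since $\Or(S)\simeq C_p\wr C_p$ contains non-thin relations, the strict inclusion $\Ot(S)<\Or(S)$ holds. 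Theorem~\ref{main1} then gives $|\Omega|\leq p^2(p+1)=p^3+p^2$.

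Because $(\Omega,S)$ is a $p$-scheme, $|\Omega|$ is a power of $p$; combined with $|\Omega|\geq n_{\Or(S)}=p^2$ and the bound above, the only candidates left are $|\Omega|=p^2$ and $|\Omega|=p^3$.

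The remaining step is to rule out $|\Omega|=p^2$. In that case $n_{\Or(S)}=|\Omega|$ forces $\Or(S)=S$, and hence $S\simeq C_p\wr C_p$. But a short calculation using the characterisation $\Or(S)=\{s:ss^\ast s=s\}$ and the structure constants of the wreath product shows that for $s=v\otimes U$ with $v\neq 1_\Gamma$ the product $ss^\ast$ is supported on the whole of $\{1_\Gamma\otimes u:u\in U\}$, so the thin residue of $C_p\wr C_p$ coincides with its thin radical $\{1_\Gamma\otimes u:u\in U\}\simeq C_p$, which is a proper subscheme of $C_p\wr C_p$. In particular $\Or(C_p\wr C_p)\not\simeq C_p\wr C_p$, contradicting the hypothesis and forcing $|\Omega|=p^3$.

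The main obstacle I foresee is the last paragraph: without the verification that $\Or(C_p\wr C_p)$ is strictly smaller than $C_p\wr C_p$, Theorem~\ref{main1} alone leaves both $|\Omega|=p^2$ and $|\Omega|=p^3$ open. The verification itself is routine from the definitions of the wreath product and of the thin residue, but it is the one non-formal input that makes the argument go through.
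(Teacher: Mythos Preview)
Your argument follows the same route as the paper's: invoke Theorem~\ref{main1}, use that $|\Omega|$ is a $p$-power, and pin down $|\Omega|=p^3$; you are in fact more careful than the paper, which simply asserts $|\Omega|>p^2$ without justification. One correction to your last paragraph: the identity $\Or(S)=\{s:ss^\ast s=s\}$ you cite is \emph{not} a general description of the thin residue---indeed every relation $s$ of $C_p\wr C_p$ satisfies $ss^\ast s=\{s\}$, so that set is all of $C_p\wr C_p$---and what actually does the work is your computation that $ss^\ast\subseteq\Ot(C_p\wr C_p)$ for each $s$, combined with the standard formula $\Or(S)=\langle ss^\ast:s\in S\rangle$, which gives $\Or(C_p\wr C_p)=\Ot(C_p\wr C_p)\subsetneq C_p\wr C_p$ and rules out $|\Omega|=p^2$ as intended.
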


In the proof of Theorem~\ref{main1}
the theory of coherent configurations plays an important role through
the thin residue extension which is a way of construction of coherent configurations from
association schemes (see \cite[Thm.~2.1]{EP} or \cite{MP}) .
The following is the kernel of our paper:
\begin{thm}\label{thm:1}
For each coherent configuration $(\Omega,S)$ whose fibers are isomorphic to $C_p\wr C_p$,
if  $|s|=p^3$ for each $s\in S$ with $\sigma_s\sigma_s=0$,
then either $|\Omega|\leq p^2(p+1)$ or $ss^\ast s=s$ for each $s\in S$.
\end{thm}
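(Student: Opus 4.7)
The plan is to analyze $(\Omega,S)$ through the interaction between its intra-fiber schemes (each a copy of $C_p\wr C_p$) and its inter-fiber relations. First I would label the fibers $F_1,\dots,F_m$, so $|\Omega|=mp^2$ and the target bound $|\Omega|\le p^2(p+1)$ becomes $m\le p+1$. Two quick checks in $C_p\wr C_p$ pin down the structure I will exploit: its thin radical is $\{1_\Gamma\otimes u\mid u\in U\}\cong C_p$, whose $p$ orbits of size $p$ I shall call the \emph{sub-blocks} of the fiber; and its thin residue is the whole scheme, so every intra-fiber relation already satisfies $ss^\ast s=s$. Because $\sigma_s\sigma_s=0$ holds if and only if $s\subseteq F_i\times F_j$ with $i\ne j$, the hypothesis says exactly that each inter-fiber relation has valency $p$ from both sides, and therefore $F_i\times F_j$ partitions into precisely $p$ relations of $S$ for each $i\ne j$.

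The central idea is an action argument. For a thin $t\in\Ot(F_j)$ the product $\sigma_s\sigma_t$ is the adjacency matrix of a single inter-fiber relation from $F_i$ to $F_j$, so right multiplication by $\Ot(F_j)\cong C_p$ permutes the $p$ inter-fiber relations in $F_i\times F_j$; left multiplication by $\Ot(F_i)$ does the same dually. Since $C_p$ acts on a $p$-element set with orbits of size $1$ or $p$, each of these two actions must be either trivial or transitive. Hence for every ordered pair $(i,j)$ with $i\ne j$ I obtain the dichotomy
\begin{enumerate}
\item[\textup{(I)}] both the left $\Ot(F_i)$- and the right $\Ot(F_j)$-action are trivial on every inter-fiber relation in $F_i\times F_j$; or
\item[\textup{(II)}] at least one of these two actions is transitive.
\end{enumerate}

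Case (I) forces $s(\alpha):=\{\beta\mid(\alpha,\beta)\in s\}$ to be a union of $\Ot(F_j)$-orbits of size $p$, hence a single sub-block of $F_j$; dually for $s^{-1}(\beta)$. Putting the two together, $s$ must be of the matching form $\bigsqcup_{k=1}^{p}B_{i,k}\times B_{j,\pi(k)}$ for some bijection $\pi$ between the sub-block indices of $F_i$ and of $F_j$. A direct block computation then gives $\sigma_s\sigma_{s^\ast}=p\sum_k J_{B_{i,k}}$ and $\sigma_s\sigma_{s^\ast}\sigma_s=p^2\sigma_s$, i.e.\ $ss^\ast s=s$. If case (I) holds for every pair of fibers, then combined with the intra-fiber remark above every $s\in S$ satisfies $ss^\ast s=s$ and the theorem follows.

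The hard part will be the complementary case: there is a pair $(i,j)$ of type (II) and I need $m\le p+1$. My plan here is to fix such a pair, say with $\Ot(F_j)$ acting transitively on the $p$ relations in $F_i\times F_j$, and then, for every further fiber $F_k$, transfer this transitivity through the coherent algebra to the inter-fiber relations in $F_j\times F_k$ and $F_i\times F_k$. The upshot should be that the admissible configurations of each additional $F_k$ correspond to points of $\mathrm{PG}(1,p)$ attached to the type-(II) pair $(F_i,F_j)$, yielding at most $p+1$ fibers in total. The inequality $m\le p+1$ is the combinatorial reflection of the group-theoretic bound $|G:N_G(N_G(H))|\le p+1$ advertised in the abstract, and the main technical obstacle will be making this projective-line classification rigorous purely through coherent-closure bookkeeping of the $\Ot(F_i)\times\Ot(F_j)$-orbit structure on inter-fiber relations with every third fiber, without invoking the group $G$.
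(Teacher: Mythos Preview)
Your Case~(I) analysis is correct and coincides with what the paper calls the \emph{regular} case: for an inter-fiber $s\in S_{ij}$ of valency~$p$, the left $\Ot(F_i)$-action fixing $s$ is equivalent to $ss^\ast=\Ot(F_i)$, and then $ss^\ast s=s$ (paper's Lemma~3.1); conversely a non-regular $s$ has $ss^\ast=\{1_{\Omega_i}\}\cup(S_i\setminus\Ot(S_i))$ (Lemma~3.2), so the left and right actions are simultaneously trivial or simultaneously transitive, and your per-pair dichotomy (I)/(II) is exactly regular/non-regular.

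The genuine gap is Case~(II), and it has two layers. First, you need the dichotomy to be \emph{global}: if one pair $(i,j)$ is of type~(II) then every pair is. The paper does not take this for granted; it proves that both $\bigcup_{s\in R}s$ and $\bigcup_{s\in N}s$ (with $N=\bigcup_i S_i\cup(S\setminus R)$) are equivalence relations on $\Omega$ (Lemmas~3.3--3.4), the second via an inner-product contradiction $p^2=\langle\sigma_{s_1}\sigma_{s_2},\sigma_{s_1}\sigma_{s_2}\rangle=p^2+p(p-1)$ that blocks a product of two non-regular relations from filling a regular $S_{ik}$. Your phrase ``transfer this transitivity through the coherent algebra'' is pointing at this step but is not an argument.

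Second, and more seriously, your projective-line count is a hope rather than a mechanism. The paper's route is concrete and quite different from anything in your sketch: once $N=S$, it exhibits a semi-regular $C_p$-automorphism of $(\Omega,S)$ synchronising the thin radicals of all fibers (Lemma~3.7), encodes each inter-fiber $s$ by a matrix $H(s)=(\xi^{h(s)_{kl}})\in M_p(\mathbb{C})$, proves $H(s)$ is a Butson-type complex Hadamard matrix (Proposition~3.1), and shows $H(s_1)H(s_2)=\alpha H(s_3)$ with $|\alpha|=\sqrt{p}$ (Proposition~3.2). The row-bases of the $H(r(\alpha_1,\alpha_i))$ together with the standard basis then form a family of mutually unbiased bases in $\mathbb{C}^p$, and $m\le p+1$ is the classical MUB bound. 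Your $\mathrm{PG}(1,p)$ intuition is morally adjacent (a complete MUB family in prime dimension is parametrised by the $p+1$ slopes in $\mathrm{AG}(2,p)$), but nothing in your outline explains how to extract an affine-plane, net, or MUB structure from the coherent configuration; that extraction is the entire content of the theorem, and ``coherent-closure bookkeeping of the $\Ot(F_i)\times\Ot(F_j)$-orbit structure'' does not by itself produce the injection of fibers into a $(p+1)$-element set.
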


In Section~2 we prepare necessary terminologies on coherent configurations. In Section~3 we prove our main results.

\section{Preliminaries}
Throughout this section, we assume that $(\Omega,S)$ is a coherent configuration (see \cite{chen2019coherent} for its background). Elements of $\Omega$ will be called \textit{points} and elements of $ S $ will be called \textit{basis relations}. Furthermore,
$|\Omega|$ and $|S|$ are called the \textit{degree} and \textit{rank} of $(\Omega,S)$, respectively.
For all $\alpha,\beta\in \Omega$ the unique element in $S$ containing $(\alpha,\beta)$ is denoted by $r(\alpha,\beta)$. For $s\in S$ and $\alpha\in \Omega$ we set
\[\alpha s:=\{\beta\in \Omega\mid (\alpha,\beta)\in s\}.\]
A subset $\Delta$ of $\Omega$ is called a \textit{fiber} of $(\Omega,S)$ if $1_\Delta\in S$.
For each $s\in S$, there exists a unique pair $(\Delta,\Gamma)$ of fibers such that $s\subseteq \Delta\times \Gamma$.
For fibers $\Delta,\Gamma$ of $(\Omega,S)$ we denote the set of $s\in S$ with 
$s\subseteq \Delta\times \Gamma$ by $S_{\Delta,\Gamma}$, and we set
$S_\Delta:=S_{\Delta,\Delta}$.
It is easily verified that $(\Delta,S_{\Delta})$ is a homogeneous coherent configuration.
Now we define the \textit{complex product} on the power set of $S$ as follows:
For all subsets $T$ and $U$ of $S$ we set
\[TU:=\{s\in S\mid c_{tu}^{s}>0\:\:\mbox{for some $t\in T$ and $u\in U$}\}\]
The singleton $\{t\}$ in a complex product is written without its parenthesis.

The following equations are frequently used without any mention:

\begin{lem}\label{lem:int}
Let $(\Omega,S)$ be a coherent configuration. Then for all $ r,s,t\in S, $ 
\begin{enumerate}
	\item if $rs\ne\emptyset$, then $n_rn_s=\sum_{t\in S}c_{rs}^tn_t$;  
	\item  $|t|c_{rs}^{t^\ast}=|r|c_{st}^{r^\ast}=|s|c_{tr}^{s^\ast}$;
	\item  $|\{t\in S\mid t\in rs\}|\leq \mathsf{gcd}(n_r,n_s)$. 
\end{enumerate}
\end{lem}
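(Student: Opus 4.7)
The plan is to prove the contrapositive: assuming $ss^{\ast}s \ne s$ for some $s \in S$, I will deduce that the number of fibers is at most $p+1$, whence $|\Omega| \le p^2(p+1)$. First I show that any such witness must be inter-fiber. Within a fiber $(\Delta, S_\Delta) \cong C_p\wr C_p$, a thin $t$ satisfies $tt^{\ast} = \{1_\Delta\}$ and hence $tt^{\ast}t = t$; for a non-thin $u = v\otimes U$ with $v \ne 1_\Gamma$, a direct wreath-product computation yields $uu^{\ast} = \{1_\Gamma\otimes u' : u' \in U\}$ with every $c_{uu^{\ast}}^{1_\Gamma\otimes u'}=p$, and since $(1_\Gamma\otimes u')\cdot u = u$ for each $u' \in U$ I obtain $\sigma_u\sigma_{u^{\ast}}\sigma_u = p^2\sigma_u$, so $uu^{\ast}u = u$. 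Therefore the offending $s$ lies in $S_{\Delta,\Gamma}$ for distinct fibers $\Delta,\Gamma$; since $\sigma_s\sigma_s = 0$ is automatic for an inter-fiber relation, the hypothesis $|s|=p^3$ applies, giving $n_s = n_{s^{\ast}} = p$ and $|S_{\Delta,\Gamma}| = p$.

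Next I analyse the left action of $\Ot(S_\Delta)\cong C_p$ on $S_{\Delta,\Gamma}$ by left multiplication. Since both have prime order $p$, this action is either trivial or regular. In the trivial case every $t \in \Ot(S_\Delta)$ satisfies $ts=s$, so $(\sigma_s\sigma_{s^{\ast}})_{\alpha,\alpha t} = |\alpha s \cap \alpha t\cdot s| = |\alpha s| = p$, giving $c_{ss^{\ast}}^t = p$ for every thin $t$; combined with $\sum_t c_{ss^{\ast}}^t n_t = p^2$ and $|ss^{\ast}|\le p$ from Lemma~\ref{lem:int}(iii), this forces $ss^{\ast} = \Ot(S_\Delta)$, whence $\sigma_s\sigma_{s^{\ast}}\sigma_s = p^2\sigma_s$ and $ss^{\ast}s = s$, a contradiction. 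Hence the action is regular: $\Ot(S_\Delta)\cdot s = S_{\Delta,\Gamma}$, the stabiliser is trivial, $ss^{\ast}\cap\Ot(S_\Delta) = \{1_\Delta\}$, and therefore $ss^{\ast} = \{1_\Delta\}\cup N$ for a non-empty set $N$ of non-thin basis relations of $S_\Delta$ with $\sum_{u\in N}c_{ss^{\ast}}^u = p-1$.

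The main obstacle is the third step: translating this local structure into a global bound on the fiber count. Because $C_p\wr C_p$ is a commutative scheme, the multiplicities $c_{s's'^{\ast}}^u$ are constant on $\Ot(S_\Delta)$-orbits in $S_{\Delta,\Gamma'}$, so each regular-case fiber $\Gamma'$ carries a profile vector $(c(u,\Gamma'))_u \in \mathbb{N}^{p-1}$ summing to $p-1$. Double-counting the triples $(u,(\alpha,\alpha'),\beta)$ with $(\alpha,\alpha') \in u$ non-thin and $r(\alpha,\beta) = r(\alpha',\beta)$ yields the global identity
\[
 p\sum_{\Gamma' \ne \Delta} c(u,\Gamma') \;=\; \bigl|\{\beta \in \Omega\setminus\Delta : r(\alpha,\beta) = r(\alpha',\beta)\}\bigr|.
\]
Combining these $p-1$ linear identities with the partition structure on $\Gamma'$ forced by regularity---the $s'$-neighbourhoods of points in one block of $\Delta$ are pairwise disjoint, hence partition $\Gamma'$---I expect to interpret the fibers as parallel classes of a net on $p^2$ points; such a net admits at most $p+1$ parallel classes (with equality for the affine plane of order $p$), yielding $k \le p+1$. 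Making the net analogy rigorous and absorbing the fibers for which the left $\Ot(S_\Delta)$-action is trivial (where one must separately show $ss^{\ast}s=s$ for every $s \in S_{\Delta,\Gamma'}$, or that they fit within the same bound) is the delicate part of the argument.
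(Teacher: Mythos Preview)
Your proposal does not address the stated Lemma~\ref{lem:int} at all. That lemma records three standard counting identities for coherent configurations (the valency identity $n_rn_s=\sum_t c_{rs}^tn_t$, the triple symmetry $|t|c_{rs}^{t^\ast}=|r|c_{st}^{r^\ast}=|s|c_{tr}^{s^\ast}$, and the bound $|rs|\le\gcd(n_r,n_s)$), and the paper treats it as background, offering no proof. What you have written is instead an attempted proof of Theorem~\ref{thm:1}: you argue by contrapositive that if some $s$ fails $ss^\ast s=\{s\}$ then the number of fibers is at most $p+1$. None of your argument touches the three assertions of Lemma~\ref{lem:int}.

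Even read as a sketch for Theorem~\ref{thm:1}, the proposal is incomplete and diverges from the paper's route. The paper proceeds via Lemmas~\ref{lem:31}--\ref{lem:35} to a dichotomy $R=S$ or $N=S$, then in the $N=S$ case builds complex Hadamard matrices $H(s)$ from the inter-fiber relations (Propositions~\ref{prop:31}--\ref{prop:32}) and recognises the fibers as a system of mutually unbiased bases in $\mathbb{C}^p$, invoking the classical bound $m\le p+1$. Your idea to interpret the fibers as parallel classes of a net on $p^2$ points is a reasonable combinatorial analogue, but you yourself flag the key step---making the net rigorous and handling the ``trivial-action'' fibers---as undone. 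In particular, you have not shown that distinct regular-case fibers yield \emph{distinct} parallel classes, nor that the trivial-action case is globally excluded once a single non-regular $s$ exists (the paper's Lemma~\ref{lem:35} does exactly this). So as a proof of Theorem~\ref{thm:1} there is a genuine gap; as a proof of Lemma~\ref{lem:int} it is simply off-target.
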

For $T\subseteq S_{\Delta,\Gamma}$ we set
\[n_T:=\sum_{t\in T}n_t.\]

Here we mention closed subsets, their subschemes and factor scheme 
according to the terminologies given in 
\cite{Z2}. Let $(\Omega,S)$ be an association scheme and $T\subseteq S$.
We say that a non-empty subset $T$ of $S$ is \textit{closed} if $TT^\ast \subseteq T$ 
where 
\[T^\ast:=\{t^\ast\mid t\in T\},\]
equivalently $\bigcup_{t\in T}t$ is 
an equivalence relation on $\Omega$ whose equivalence classes are 
\[\{\alpha T\mid \alpha \in \Omega\}\]
where $\alpha T:=\{\beta \in \Omega \mid (\alpha,\beta)\in t\:
\mbox{for some $t \in T$}\}$.
Let $T$ be a closed subset of $S$ and $\alpha\in \Omega$.
It  is well known (\cite{Z1}) that
\[(\Omega,S)_{\alpha T}:=(\alpha T, \{t\cap (\alpha T\times \alpha T)\mid t\in T\})\] 
is an association scheme,
called the \textit{subscheme} of $(\Omega,S)$ induced by $\alpha T$, and that
\[(\Omega,S)^T:=(\Omega/T, S\Qd T)\] 
is also an association scheme
where  
\[\mbox{$\Omega/T:=\{\alpha T\mid \alpha \in \Omega\}$,
$S\Qd T=\{s^T\mid s\in S\}$ and}\]
\[s^T:=\{(\alpha T,\beta T)\mid (\gamma,\delta)\in s\:\mbox{for some $(\gamma,\delta)\in \alpha T\times \beta T$}\},\]
which is called the \textit{factor scheme} of $(\Omega,S)$ over $T$.

We say that a closed subset $T$ is \textit{thin} if $n_t=1$ for each $t\in T$,
and the set $ \{s \in S \mid n_s =1\} $ denoted by $\Ot(S)$ is called the \textit{thin radical} of $S$, and the smallest closed subset $T$
such that $S\Qd T$ is thin is called the \textit{thin residue} of $S$, which is denoted by
$\Or(S)$.

\section{Proof of the main theorem}
Let $(\Omega,S)$ be a coherent configuration whose distinct fibers are $\Omega_1,\Omega_2,\ldots, \Omega_m$.
For all integers $i,j$ with $1\leq i,j\leq m$ we set
\[S_{ij}:=S_{\Omega_i,\Omega_j}\mbox{ and $S_i:=S_{ii}$.}\]

Throughout this section we assume that
$(\Omega_i, S_i)\simeq C_p\wr C_p$ for $i=1,2,\ldots, m$ where $p$ is a prime and
$C_p\wr C_p$ is the unique up to isomorphism non-thin $p$-scheme of degree $p^2$.

For $s\in S$ we say that $s$ is \textit{regular} if $ss^\ast s=\{s\}$ and
we denote by $R$ the set of regular elements in $S$ \cite{yoshikawa2016association}.

\begin{lem}\label{lem:31}
For each regular element $s\in S_{ij}$ with $n_s=p$ we have
\[\mbox{$\sigma_s\sigma_{s^\ast}=p(\sum_{t\in \Ot(S_i)}\sigma_t)$ and $\sigma_{s^\ast}\sigma_s=p(\sum_{t\in \Ot(S_j)}\sigma_t)$.}\]
In particular, $ss^\ast=\Ot(S_i)$ and $s^\ast s=\Ot(S_j)$.
\end{lem}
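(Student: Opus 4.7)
The plan is to first show the support $ss^\ast$ is contained in the thin radical $\Ot(S_i)$, then to upgrade this to an equality by proving that $\Ot(S_i)$ stabilizes $s$ from the left, and finally to read off the structure constants via a two-way computation of $\sigma_{\Ot(S_i)}\,\sigma_s\,\sigma_{s^\ast}$.

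For the first step, associativity of the complex product together with the regularity $ss^\ast s = \{s\}$ yields $us \subseteq \{s\}$ for every $u \in ss^\ast$, which combined with non-emptiness forces $us = \{s\}$ and hence $c_{us}^s = n_u$ by Lemma~\ref{lem:int}(i). If some $u \in ss^\ast$ were non-thin, then $n_u = p$ and, for any $(\alpha,\beta) \in s$, all $p$ elements of $\alpha u$ would lie in $\beta s^\ast$, so by cardinality $\alpha u = \beta s^\ast$. But $(\beta, \alpha) \in s^\ast$ gives $\alpha \in \beta s^\ast = \alpha u$, hence $(\alpha, \alpha) \in u$ for every $\alpha \in \Omega_i$, which forces $u = 1_{\Omega_i}$ and contradicts $n_u = p$. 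Therefore $ss^\ast \subseteq \Ot(S_i)$.

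Set $T_s := \{t \in \Ot(S_i) : ts = \{s\}\}$. Since thin basis relations multiply as a group, $T_s$ is a subgroup of $\Ot(S_i) \cong C_p$; moreover $c_{ss^\ast}^{1_{\Omega_i}} = n_s = p$ together with $\sum_{u \in ss^\ast} c_{ss^\ast}^u = n_s n_{s^\ast} = p^2$ forces $ss^\ast \neq \{1_{\Omega_i}\}$, so $T_s \supseteq ss^\ast$ contains a non-identity element of a cyclic group of prime order and hence $T_s = \Ot(S_i)$.

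Writing $J := \sum_{t \in \Ot(S_i)} \sigma_t$, the identity $T_s = \Ot(S_i)$ gives $\sigma_t \sigma_s = \sigma_s$ for every $t \in \Ot(S_i)$, so $J\sigma_s = p\sigma_s$ and therefore $J\sigma_s\sigma_{s^\ast} = p\,\sigma_s\sigma_{s^\ast}$. On the other hand, because $ss^\ast \subseteq \Ot(S_i)$ and right multiplication by any thin $u \in \Ot(S_i)$ merely permutes the basis relations of $\Ot(S_i)$ (so that $J\sigma_u = J$), one also has $J\sigma_s\sigma_{s^\ast} = \bigl(\sum_u c_{ss^\ast}^u\bigr) J = p^2 J$. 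Comparing the two expressions gives $\sigma_s\sigma_{s^\ast} = pJ = p\sum_{t\in\Ot(S_i)}\sigma_t$, as required; the formula for $\sigma_{s^\ast}\sigma_s$ then follows by applying the same argument to $s^\ast$, which is itself regular since $s^\ast(s^\ast)^\ast s^\ast = (ss^\ast s)^\ast = \{s^\ast\}$. The main obstacle is the non-thin exclusion in the first step; the remaining steps reduce to matrix manipulation combined with a short group-theoretic count inside $\Ot(S_i)$.
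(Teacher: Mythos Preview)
Your proof is correct, but it reaches the conclusion by a different route than the paper's. The paper argues that the full left stabilizer $\{t\in S_i\mid ts=\{s\}\}$ is a \emph{closed} subset of $S_i$ of valency at most $n_s=p$; since the only closed subsets of $C_p\wr C_p$ are $\{1_{\Omega_i}\}$, $\Ot(S_i)$, and $S_i$, and since $ss^\ast\supsetneq\{1_{\Omega_i}\}$, this forces the stabilizer to equal $\Ot(S_i)$. It then reads off $c_{ss^\ast}^t=p$ directly from the intersection-number identity of Lemma~\ref{lem:int}(ii). By contrast, you avoid both the ``stabilizer is closed of valency $\le n_s$'' fact and the closed-subset lattice of $C_p\wr C_p$: you exclude non-thin elements from $ss^\ast$ by the explicit coset argument $\alpha u=\beta s^\ast\ni\alpha$, then use only that $\Ot(S_i)\cong C_p$ has no proper nontrivial subgroups, and finally extract the structure constants via the two-way evaluation of $J\sigma_s\sigma_{s^\ast}$. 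Your approach is more elementary and self-contained; the paper's is shorter but leans on structural facts about closed subsets. One small point worth making explicit in your write-up: the equality $|\beta s^\ast|=p$ uses $n_{s^\ast}=n_s$, which holds here because all fibers have the same size $p^2$.
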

\begin{proof}
Notice that $\{1_{\Omega_i}\}\subsetneq ss^\ast\subset S_i$ and $ts=\{s\}$ for each $t\in ss^\ast$.
Since $\{t\in S_i\mid ts=\{s\}\}$ is a closed subset of valency at most $n_s$, it follows from $(\Omega_i,S_i)\simeq C_p\wr C_p$ that
$ss^\ast=\Ot(S_i)$, and hence for each $t\in ss^\ast$ we have $ n_t=1 $ and
\[c_{ss^\ast}^t=c_{st}^sn_{s^\ast}/n_{t^\ast}=p.\]
This implies that $\sigma_s\sigma_{s^\ast}=p(\sum_{t\in \Ot(S_i)}\sigma_t)$.
By symmetry, $\sigma_{s^\ast}\sigma_s=p(\sum_{t\in \Ot(S_j)}\sigma_t)$.
\end{proof}

\begin{lem}\label{lem:32}
For each non-regular element $s\in S_{ij}$ with $n_s=p$ we have
$\sigma_s\sigma_{s^\ast}=p\sigma_{1_{\Omega_i}}+\sum_{u\in S_i\setminus\Ot(S_i)}\sigma_u$ and
$\sigma_{s^\ast}\sigma_s=p\sigma_{1_{\Omega_j}}+\sum_{u\in S_j\setminus\Ot(S_j)}\sigma_u$.
\end{lem}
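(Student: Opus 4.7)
My plan follows the method of Lemma~\ref{lem:31} and sharpens it in the non-regular case. First, I would set $T := \{t \in S_i \mid ts = \{s\}\}$, which is a closed subset of $S_i$: closure under the complex product follows because $\sigma_t\sigma_s = c_{ts}^s\sigma_s$ for $t \in T$, and closure under $\ast$ follows because the defining condition is equivalent to the symmetric condition ``$\alpha s = \beta s$ for all $(\alpha,\beta) \in t$''. Since $(\Omega_i, S_i) \simeq C_p \wr C_p$ admits only the closed subsets $\{1_{\Omega_i}\}$, $\Ot(S_i)$ and $S_i$, and $n_T \le n_s = p$ by the cardinality argument of Lemma~\ref{lem:31}, we have $T \in \{\{1_{\Omega_i}\}, \Ot(S_i)\}$. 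The case $T = \Ot(S_i)$ would yield $\sigma_s\sigma_{s^\ast} = p\sum_{t \in \Ot(S_i)}\sigma_t$ exactly as in Lemma~\ref{lem:31}, making $s$ regular and contradicting the hypothesis. Hence $T = \{1_{\Omega_i}\}$.

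Next, I would extract the required partition structure. For each $t \in \Ot(S_i)$, thinness gives $ts = \{s_t\}$ for a unique $s_t \in S_{ij}$, and the map $t \mapsto s_t$ is injective, since $s_t = s_{t'}$ would force $(t')^\ast t \in T = \{1_{\Omega_i}\}$. Thus for $t \in \Ot(S_i)\setminus\{1_{\Omega_i}\}$ we have $s_t \neq s$, so $\alpha s \cap \alpha s_t = \emptyset$ for every $\alpha \in \Omega_i$; the identity $\alpha s_t = \beta s$ (where $\beta$ is the image of $\alpha$ under the bijection realised by the thin element $t$) translates this to $\alpha s \cap \beta s = \emptyset$ for every $(\alpha,\beta) \in t$, so $c_{ss^\ast}^t = 0$. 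This contributes the term $p\sigma_{1_{\Omega_i}}$ from $\Ot(S_i)$ and, more importantly, shows that for any $\Ot(S_i)$-coset $B$ of $\Omega_i$, the sets $\{\beta s \mid \beta \in B\}$ are pairwise disjoint $p$-subsets of $\Omega_j$; since their total size is $p^2 = |\Omega_j|$, they in fact partition $\Omega_j$.

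Finally, for each $t \in S_i \setminus \Ot(S_i)$ and any $\alpha \in \Omega_i$, the structure of $C_p \wr C_p$ forces $\alpha t$ to be an entire $\Ot(S_i)$-coset $B$ of $\Omega_i$ (one sees this from $t^\ast t \subseteq \Ot(S_i)$). Using the partition property,
\[
p \cdot c_{ss^\ast}^t = \sum_{\beta \in \alpha t}|\alpha s \cap \beta s| = \bigl|\alpha s \cap \bigsqcup_{\beta \in B}\beta s\bigr| = |\alpha s \cap \Omega_j| = p,
\]
so $c_{ss^\ast}^t = 1$. This gives the first equality of the lemma, and the second follows by applying the same argument to $s^\ast$ (which is non-regular if and only if $s$ is). I expect the main obstacle to be the partition claim in step two: recognising that the $\Ot(S_i)$-blocks of $\Omega_i$ are mapped by $s$ onto partitions of $\Omega_j$. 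Once this is in hand the sum over a block collapses cleanly to $|\alpha s| = p$, but without it one has only the row-sum constraint $\sum_{u \in S_i \setminus \Ot(S_i)} c_{ss^\ast}^u = p-1$, which cannot pin down individual coefficients.
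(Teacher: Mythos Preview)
Your proof is correct. Both your argument and the paper's begin identically: showing that $T=\{t\in S_i\mid ts=\{s\}\}=\{1_{\Omega_i}\}$ and deducing that $t\mapsto ts$ injects $\Ot(S_i)$ into $S_{ij}$, whence $\Ot(S_i)s=S_{ij}$. The finishes differ slightly. The paper argues algebraically that every non-thin $u\in S_i$ lies in $S_{ij}s^\ast=\Ot(S_i)ss^\ast$, then uses the fact $t^\ast u=\{u\}$ for thin $t$ and non-thin $u$ in $C_p\wr C_p$ to conclude $u\in ss^\ast$; the valency identity $p^2=\sum_{u} c_{ss^\ast}^{u} n_u$ is then squeezed against the lower bound $p+(p-1)p$ to pin down all coefficients (including the vanishing of the thin ones) at once. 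You instead compute each $c_{ss^\ast}^{u}$ directly from the partition of $\Omega_j$ by $\{\beta s:\beta\in B\}$ as $B$ ranges over an $\Ot(S_i)$-block, obtaining the vanishing of the non-identity thin coefficients explicitly rather than as a residue of the squeeze. Both routes rely on the same structural fact about $C_p\wr C_p$ (non-thin classes are unions of $\Ot$-blocks), just invoked at different moments; your version is a touch more geometric, the paper's a touch more economical.
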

\begin{proof}
Notice that $\{t\in S_i\mid ts=\{s\}\}=\{1_{\Omega_i}\}$, otherwise, $s$ is regular or $n_s=p^2$, a contradiction.
This implies that the singletons $ts$ with $t\in \Ot(S_i)$ are distinct elements of valency $p$.
Since
\[p^2=|\Omega_j|=\sum_{s\in S_{ij}}n_s\geq \sum_{t\in \Ot(S_i)}n_{ts}=p+p+\cdots+p=p^2,\]
it follows that $\Ot(S_i)s=S_{ij}$.

We claim that $S_i\setminus \Ot(S_i)\subseteq ss^\ast$.
Let $u\in S_i\setminus \Ot(S_i)$.
Then there exists $t\in \Ot(S_i)$ such that $u\in tss^\ast$ since $u\in S_{ij}s^\ast=\Ot(S_i)ss^\ast$.
This implies that $u=t^\ast u\subseteq t^\ast (tss^\ast)=ss^\ast$.

By the claim with $p^2=n_sn_{s^\ast}=\sum_{t\in S_i}c_{ss^\ast t}n_t$ 
and $c_{ss^\ast 1_{\Omega_i}}=n_s=p$ we have the first statement,
and the second statement is obtained by symmetry.
\end{proof}

For the remainder of this section we assume that $n_s=p$ for each $s\in \bigcup_{i\ne j}S_{ij}$.
\begin{lem}\label{lem:33}
The set $\bigcup_{s\in R}s$ is an equivalence relation on $\Omega$.
\end{lem}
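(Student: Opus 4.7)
The plan is to verify the three defining properties of an equivalence relation for $\bigcup_{s\in R} s\subseteq\Omega\times\Omega$. Reflexivity is immediate from $1_{\Omega_i}\cdot 1_{\Omega_i}^\ast\cdot 1_{\Omega_i}=\{1_{\Omega_i}\}$, so $1_{\Omega_i}\in R$ for every fiber; symmetry follows from $(ss^\ast s)^\ast=s^\ast s s^\ast$, which gives $R=R^\ast$. The substantive task is transitivity: given $s\in R\cap S_{ij}$ and $t\in R\cap S_{jk}$ with $(\alpha,\beta)\in s$ and $(\beta,\gamma)\in t$, I must show that $u:=r(\alpha,\gamma)$ belongs to $R$.

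The core idea is that each non-thin regular $s$ (with $n_s=p$) induces a bijection $\widetilde s:\Omega_i/\Ot(S_i)\to\Omega_j/\Ot(S_j)$ of thin cosets, sending $\alpha\Ot(S_i)$ to $\alpha s$. Lemma~\ref{lem:31} guarantees that $\alpha s$ is a full $\Ot(S_j)$-coset, and regularity (through $rs=\{s\}$ for every $r\in ss^\ast=\Ot(S_i)$) shows that $\alpha s$ depends only on $\alpha\Ot(S_i)$; bijectivity follows by cardinality. Composing, $\alpha(st)$ equals the single $\Ot(S_k)$-coset $C:=\widetilde t(\widetilde s(\alpha\Ot(S_i)))$, determined by $\alpha\Ot(S_i)$ alone.

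For the main case where both $s,t$ are non-thin and $n_u=p$, this forces $\alpha u=C$ (since $\alpha u\subseteq C$ and $|\alpha u|=p=|C|$), and the same holds for every $\alpha'\in\alpha\Ot(S_i)$, while injectivity of $\widetilde t\circ\widetilde s$ rules out $\alpha''u=C$ for $\alpha''\notin\alpha\Ot(S_i)$. Hence $uu^\ast=\Ot(S_i)$. For each $r\in\Ot(S_i)$, the identity $(\alpha r)u=\alpha u=C$, valid for every $\alpha$, yields $\sigma_r\sigma_u=\sigma_u$, i.e.\ $ru=\{u\}$; thus $uu^\ast u=\Ot(S_i)\cdot u=\{u\}$, proving $u\in R$. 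If instead $n_u=1$ (possible only when $i=k$), then $u$ is thin and trivially regular.

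The remaining cases are easier bookkeeping. If $s\in\Ot(S_i)$, regularity of $t$ gives $st\subseteq\Ot(S_i)\cdot t=tt^\ast t=\{t\}$, so $u=t\in R$; symmetrically when $t$ is thin. When both $s,t$ lie within a single fiber $S_i\simeq C_p\wr C_p$, one checks directly from the wreath-product description that every basis relation of $C_p\wr C_p$ is regular, so any $u\in st\subseteq S_i$ is regular. The principal obstacle I anticipate is the ``$\Ot$-coset rigidity'' underlying $\widetilde s$ --- that $\alpha s$ is a full $\Ot(S_j)$-coset depending only on $\alpha\Ot(S_i)$ --- which needs to be carefully extracted from Lemma~\ref{lem:31}; once that is in hand, the regularity of $u$ follows by direct combinatorics without any further matrix computation.
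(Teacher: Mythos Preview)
Your argument is correct, but it takes a more hands-on route than the paper. The paper dispatches transitivity in two lines of complex-product algebra: writing $s=r(\alpha,\beta)$, $t=r(\beta,\gamma)$, $u=r(\alpha,\gamma)$, one has $uu^\ast\subseteq s\,(tt^\ast)\,s^\ast$; when $s,t$ are non-thin regular, Lemma~\ref{lem:31} gives $tt^\ast=\Ot(S_j)=s^\ast s$, so $uu^\ast\subseteq (ss^\ast s)s^\ast=ss^\ast=\Ot(S_i)$, and then the dichotomy of Lemmas~\ref{lem:31}--\ref{lem:32} forces $u$ to be regular (a non-regular $u$ of valency $p$ would have $uu^\ast$ meeting $S_i\setminus\Ot(S_i)$). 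Your approach instead builds the coset bijections $\widetilde s,\widetilde t$ explicitly and verifies $uu^\ast=\Ot(S_i)$ and $\Ot(S_i)u=\{u\}$ by direct point-counting, never invoking Lemma~\ref{lem:32} at all. This buys a self-contained geometric picture (and in fact anticipates the coset-level viewpoint used later around Lemmas~\ref{lem:38}--\ref{lem:39}) at the cost of length; the paper's version is slicker but relies on the regular/non-regular dichotomy already packaged in the two preceding lemmas. One small wrinkle: in your thin case the identity $\Ot(S_i)\cdot t=tt^\ast t$ is only valid when $t$ is non-thin (for thin $t$ one has $tt^\ast=\{1_{\Omega_j}\}$), though the conclusion $u\in R$ is of course immediate there anyway.
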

\begin{proof}
Since $1_{\Omega_i}\in S_i\subseteq R$ for $i=1,2\ldots,m$, $\bigcup_{s\in R}s$ is reflexive.
Since $ss^\ast s=\{s\}$ is equivalent to $s^\ast ss^\ast=\{s^\ast\}$, $\bigcup_{s\in R}s$ is symmetric.

Let $\alpha\in \Omega_i$, $\beta\in \Omega_j$ and $\gamma\in \Omega_k$ with $r(\alpha,\beta),r(\beta,\gamma)\in R$. 
Then we have
\[r(\alpha,\gamma)r(\alpha,\gamma)^\ast\subseteq r(\alpha,\beta)r(\beta,\gamma)r(\beta,\gamma)^\ast r(\alpha,\beta)^\ast.\]
If one of $r(\alpha,\beta)$, $r(\beta,\gamma)$ is thin, then $r(\alpha,\gamma)r(\alpha,\gamma)^\ast \subseteq \Ot(S_i)$ , and hence $r(\alpha,\gamma)\in R$. Now we assume that both of them are non-thin.
Since $r(\beta,\gamma)r(\beta,\gamma)^\ast=\Ot(S_j)=r(\alpha,\beta)^\ast r(\alpha,\beta)$, it follows that
\[r(\alpha,\gamma)r(\alpha,\gamma)^\ast\subseteq  r(\alpha,\beta) r(\alpha,\beta)^\ast=\Ot(S_i).\]
Applying Lemma~\ref{lem:31} and \ref{lem:32} we obtain that $r(\alpha,\gamma)$ is regular, and hence
$\bigcup_{s\in R}s$ is transitive.
\end{proof}
\begin{lem}\label{lem:34}
The set $\bigcup_{s\in N}s$ is an equivalence relation on $\Omega$
where $N:=\bigcup_{i=1}^mS_i\cup (S\setminus R)$.
\end{lem}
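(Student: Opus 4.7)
The plan is to verify reflexivity, symmetry, and transitivity of the binary relation $\bigcup_{s \in N} s$ on $\Omega$. Reflexivity is immediate from $1_{\Omega_i} \in S_i \subseteq N$, and symmetry follows from $S_i^* = S_i$ together with the fact that $ss^*s = s$ iff $s^*ss^* = s^*$, so $(S \setminus R)^* = S \setminus R$. All the real content is in transitivity.

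For transitivity, let $\alpha \in \Omega_i$, $\beta \in \Omega_j$, $\gamma \in \Omega_k$ with $r(\alpha,\beta), r(\beta,\gamma) \in N$; if $i = k$ then $r(\alpha,\gamma) \in S_i \subseteq N$, so assume $i \neq k$. First I would establish the following dichotomy: for distinct indices $a,b$, either every element of $S_{ab}$ is regular or every element is non-regular. Indeed, if $s \in S_{ab}$ is non-regular, the argument in the proof of Lemma~\ref{lem:32} gives $\Ot(S_a) s = S_{ab}$, and for each $v \in \Ot(S_a)$ the identity $(vs)^*(vs) = s^*(v^*v)s = s^*s$ shows that $vs$ inherits the non-regular signature of Lemma~\ref{lem:32}. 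With this, the sub-case $j \in \{i,k\}$ is immediate: say $j = i$; then $r(\beta,\gamma) \in N$ combined with $i \neq k$ forces $r(\beta,\gamma)$ non-regular, so every element of $S_{ik}$ is non-regular by the dichotomy, and $r(\alpha,\gamma) \in N$.

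The decisive case is $i,j,k$ pairwise distinct. Set $s_1 = r(\alpha,\beta)$ and $s_2 = r(\beta,\gamma)$, both non-regular, and suppose for contradiction that $t = r(\alpha,\gamma)$ is regular; by the dichotomy every $y \in S_{ik}$ is regular. Writing $J_{ab}$ for the all-ones matrix on $\Omega_a \times \Omega_b$, I would first observe that $\sigma_{\Ot(S_i)}\sigma_{s_1} = J_{ij}$ (since $v \mapsto vs_1$ is a bijection $\Ot(S_i) \to S_{ij}$) and then $J_{ij}\sigma_{s_2} = pJ_{ik}$ by a row-sum check. On the other hand, regularity of each $y \in S_{ik}$ gives $\sigma_{\Ot(S_i)}\sigma_y = p\sigma_y$, so $\sigma_{\Ot(S_i)}\sigma_{s_1}\sigma_{s_2} = p\sigma_{s_1}\sigma_{s_2}$; comparing the two evaluations yields $\sigma_{s_1}\sigma_{s_2} = J_{ik}$. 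An entirely analogous two-way computation gives $\sigma_u\sigma_{s_2} = J_{jk}$ for every $u \in S_j \setminus \Ot(S_j)$. Now associate $\sigma_{s_1^*}\sigma_{s_1}\sigma_{s_2}$ two ways: left grouping gives $\sigma_{s_1^*}J_{ik} = pJ_{jk}$; right grouping, via Lemma~\ref{lem:32}, gives
\[
\Bigl(p\sigma_{1_{\Omega_j}} + \sum_{u \in S_j \setminus \Ot(S_j)}\sigma_u\Bigr)\sigma_{s_2} = p\sigma_{s_2} + (p-1)J_{jk}.
\]
Equating yields $J_{jk} = p\sigma_{s_2}$, which is absurd since $J_{jk}$ has every entry equal to $1$ while $p\sigma_{s_2}$ has entries in $\{0,p\}$ and $p \geq 2$.

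The main obstacle is this three-distinct-fiber case, where no obvious trick propagates the non-regularity of $s_1$ and $s_2$ to $r(\alpha,\gamma)$; the proof hinges on the rigid structure of $C_p \wr C_p$ (every intra-fiber relation is regular, and $\Ot(S_i)$ acts regularly on $S_{ij}$ in the non-regular case) combined with associativity of the coherent algebra.
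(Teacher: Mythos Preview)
Your proof is correct and follows essentially the same strategy as the paper: reduce to the case where $i,j,k$ are pairwise distinct with $s_1,s_2$ non-regular, show $\sigma_{s_1}\sigma_{s_2}=J_{ik}$, and extract a contradiction from Lemma~\ref{lem:32} via associativity. The differences are only in packaging: the paper invokes Lemma~\ref{lem:33} directly to obtain the fiber-wise dichotomy and to handle the case $j\in\{i,k\}$, whereas you re-derive the dichotomy by hand; and for the final contradiction the paper computes the single inner product $\langle\sigma_{s_1}\sigma_{s_2},\sigma_{s_1}\sigma_{s_2}\rangle=\langle\sigma_{s_1}^\ast\sigma_{s_1},\sigma_{s_2}\sigma_{s_2}^\ast\rangle$ to get $p^2=2p^2-p$, while you expand $\sigma_{s_1^\ast}\sigma_{s_1}\sigma_{s_2}$ and need the auxiliary identity $\sigma_u\sigma_{s_2}=J_{jk}$ along the way. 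The paper's route is slightly shorter, but the underlying idea is the same.
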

\begin{proof}
Since $1_{\Omega_i}\in S_i\subseteq N$ for $i=1,2\ldots,m$, $\bigcup_{s\in N}s$ is reflexive.
By Lemma~\ref{lem:33}, $\bigcup_{s\in R}$ is symmetric, so that $\bigcup_{s\in N}s$ is symmetric.

Let $\alpha\in \Omega_i$, $\beta\in \Omega_j$ and $\gamma\in \Omega_k$ with $r(\alpha,\beta),r(\beta,\gamma)\in N$.
Since $\bigcup_{i=1}^m S_i\subseteq R$, it follows from Lemma~\ref{lem:33}
that it suffices to show that
\[r(\alpha,\gamma)\in S\setminus R\]
under the assumption
that
\[\mbox{$r(\alpha,\beta),r(\beta,\gamma)\in S\setminus R$ with $i\ne k$.}\]
Suppose the contrary, i.e., $r(\alpha,\gamma)\in R$.
Then, by Lemma~\ref{lem:33}, $S_{ik}\subseteq R$.
Since
\[r(\alpha,\beta)r(\beta,\gamma)\subseteq S_{ik}\subseteq R,\]
it follows that
\[\Ot(S_i)r(\alpha,\beta)r(\beta,\gamma)=r(\alpha,\beta)r(\beta,\gamma).\]
On the other hand, we have
\[\Ot(S_i)r(\alpha,\beta)r(\beta,\gamma)=S_{ij}r(\beta,\gamma)=S_{ik}.\]
Thus,  $r(\alpha,\beta)r(\beta,\gamma)=S_{ik}$.
Since $i\ne k$, each element of $S_{ik}$ has valency $p$, and hence,
\[\sigma_{s_1}\sigma_{s_2}=\sum_{u\in S_{ik}}\sigma_u\]
where $s_1:=r(\alpha,\beta)$ and $s_2:=r(\beta,\gamma)$.
By Lemma~\ref{lem:32},
\[p^2=\gn{\sigma_{s_1}\sigma_{s_2},\sigma_{s_1}\sigma_{s_2}}=\gn{\sigma_{s_1}^\ast \sigma_{s_1},\sigma_{s_2}\sigma_{s_2}^\ast}=p^2+p(p-1),\]
a contradiction where $\gn{\:\:,\:\:}$ is the inner product defined by
\[\gn{A,B}:=1/p^2\mathrm{tr}(AB^\ast)\:\:\mbox{for all $A,B\in M_\Omega(\mathbb{C})$}.\] 
Therefore, $\bigcup_{s\in N}s$ is transitive.
\end{proof}

\begin{lem}\label{lem:35}
We have either $R=S$ or $N=S$.
\end{lem}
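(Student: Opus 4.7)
The plan is to derive a contradiction from the joint assumption $R\ne S$ and $N\ne S$, using only two ingredients: the trivial identity $R\cup N=S$, and the fact that $\bigcup_{s\in R}s$ and $\bigcup_{s\in N}s$ are equivalence relations on $\Omega$ (Lemmas~\ref{lem:33} and~\ref{lem:34}). The identity $R\cup N=S$ holds because each $s\in S$ is either regular, and so lies in $R$, or non-regular, and so lies in $S\setminus R\subseteq N$ by the definition of $N$. Observe also that $R$ is closed under $s\mapsto s^\ast$ (as recorded in the proof of Lemma~\ref{lem:33}) and so is $N$, since both $\bigcup_iS_i$ and $S\setminus R$ are $\ast$-closed.

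Assuming $R\ne S$ and $N\ne S$, each of the two equivalence relations on $\Omega$ has at least two equivalence classes, so for \emph{every} $\beta\in\Omega$ both $\beta R\subsetneq\Omega$ and $\beta N\subsetneq\Omega$. Fixing any such $\beta$, I would choose $\alpha\in\Omega\setminus\beta R$ and $\gamma\in\Omega\setminus\beta N$. Using $S=R\cup N$ together with the $\ast$-closure remarks above, these choices force
\[r(\alpha,\beta)\in N\setminus R\quad\text{and}\quad r(\beta,\gamma)\in R\setminus N.\]

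The heart of the argument is then to examine $r(\alpha,\gamma)$. If $r(\alpha,\gamma)\in R$, transitivity of $\bigcup_{s\in R}s$ along $\alpha,\gamma,\beta$, together with $r(\gamma,\beta)=r(\beta,\gamma)^\ast\in R$, forces $r(\alpha,\beta)\in R$, contradicting the choice of $\alpha$. If instead $r(\alpha,\gamma)\in N$, transitivity of $\bigcup_{s\in N}s$ along $\beta,\alpha,\gamma$ (using $r(\beta,\alpha)\in N$) forces $r(\beta,\gamma)\in N$, contradicting the choice of $\gamma$. Since $r(\alpha,\gamma)\in S=R\cup N$ exhausts the possibilities, a contradiction follows.

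The only potential obstacle is arranging for a single point $\beta$ to witness $\beta R\subsetneq\Omega$ and $\beta N\subsetneq\Omega$ simultaneously, but this is essentially free: any partition with at least two blocks has every block strictly contained in the whole set, so \emph{every} $\beta$ works once both strict inclusions $R\subsetneq S$ and $N\subsetneq S$ are in force. Thus the proof really reduces to the one-line set identity $R\cup N=S$ combined with the two previously established equivalence-relation statements.
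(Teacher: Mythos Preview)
Your proof is correct and follows essentially the same approach as the paper: both arguments rest on the identity $R\cup N=S$ together with the two equivalence relations from Lemmas~\ref{lem:33} and~\ref{lem:34}, and both derive the conclusion from a transitivity chase on a triple $\alpha,\beta,\gamma$. The only cosmetic difference is that the paper argues directly (assuming $R\ne S$ and showing every $r(\alpha,\beta)\in R$ already lies in $R\cap N=\bigcup_i S_i$, whence $N=S$), whereas you argue by contradiction from $R\ne S$ and $N\ne S$ simultaneously; your symmetric setup is arguably a touch cleaner.
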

\begin{proof}
Suppose $R\ne S$.
Let $\alpha,\beta\in \Omega$ with $r(\alpha,\beta)\in R$.
Since $R\ne S$, there exists $\gamma\in \Omega$ with $r(\alpha,\gamma)\in N$.
Notice that $r(\beta,\gamma)\in R\cup N$.
By Lemma~\ref{lem:33}, $r(\beta,\gamma)\in N$, and hence, by Lemma~\ref{lem:34},
\[r(\alpha,\beta)\in R\cap N=\bigcup_{i=1}^mS_i.\]
Since $\alpha,\beta\in \Omega$ are arbitrarily taken, it follows that
\[\mbox{$R=\bigcup_{i=1}^mS_i$ and $N=S$.}\]
\end{proof}

\begin{lem}\label{lem:36}
Suppose that $S=N$ and $s_1\in S_{ij}$, $s_2\in S_{jk}$ and $s_3\in S_{ik}$ with distinct $i,j,k$.
Then $\sigma_{s_1}\sigma_{s_2}=\sigma_{s_3}(\sum_{t\in \Ot(S_k)}a_t \sigma_t)$ for some non-negative integers $a_t$ with $\sum_{t\in \Ot(S_k)}a_t=p$,
$\sum_{t\in \Ot(S_k)}a_t^2=2p-1$ and for each $u\in \Ot(S_k)\setminus \{1_{\Omega_k}\}$,
$\sum_{t\in \Ot(S_k)}a_ta_{tu}=p-1$.
\end{lem}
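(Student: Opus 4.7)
My plan is to first understand the structure of $S_{ik}$. Since $S = N$ and $s_3 \in S_{ik}$ with $i \neq k$, $s_3$ is non-regular, so Lemma~\ref{lem:32} applied to $s_3^*$ (and taking adjoints) yields $s_3\Ot(S_k) = S_{ik}$. Combining $|\Ot(S_k)| = p = |S_{ik}|$ with Lemma~\ref{lem:int}(iii) gives $|s_3 t| = 1$ for each $t \in \Ot(S_k)$, so the assignment $t \mapsto s_3 t$ is a bijection $\Ot(S_k) \to S_{ik}$ with $\sigma_{s_3}\sigma_t = \sigma_{s_3 t}$. Expanding $\sigma_{s_1}\sigma_{s_2}$ in the basis indexed by $S_{ik}$ then yields
\[
\sigma_{s_1}\sigma_{s_2} \;=\; \sigma_{s_3}\sum_{t \in \Ot(S_k)} a_t\sigma_t, \qquad a_t := c_{s_1 s_2}^{\,s_3 t}\in\mathbb{Z}_{\geq 0},
\]
and the valency identity $n_{s_1}n_{s_2} = p\sum_t a_t$ delivers $\sum_t a_t = p$.

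Next, I would obtain the remaining two identities uniformly by computing $\gn{\sigma_{s_1}\sigma_{s_2},\, \sigma_{s_1}\sigma_{s_2}\sigma_u}$ for each $u \in \Ot(S_k)$ in two ways. A direct expansion using $\sigma_{s_3 t}\sigma_u = \sigma_{s_3\cdot tu}$ together with the orthogonality $\gn{\sigma_{s_3 t}, \sigma_{s_3 t'}} = p\,\delta_{tt'}$ yields $p\sum_t a_t a_{tu^{-1}}$. Cycling inside the trace rewrites the same quantity as $\gn{\sigma_{s_1}^*\sigma_{s_1},\, \sigma_{s_2}\sigma_u\sigma_{s_2}^*}$, an inner product now living inside the fiber algebra of $(\Omega_j, S_j) \simeq C_p \wr C_p$.

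For $u = 1_{\Omega_k}$, Lemma~\ref{lem:32} identifies both factors with $p\sigma_{1_{\Omega_j}} + \sum_{v \in S_j\setminus\Ot(S_j)}\sigma_v$; using $|v| = p^3$ for non-thin $v$, the inner product evaluates to $p(2p-1)$, giving $\sum_t a_t^2 = 2p - 1$. For $u \neq 1_{\Omega_k}$ the main obstacle is to linearize $\sigma_{s_2}\sigma_u\sigma_{s_2}^*$. I will argue that $s_2 u = \phi(u)\,s_2$ for a unique $\phi(u)\in \Ot(S_j)\setminus\{1_{\Omega_j}\}$: the two bijections $\Ot(S_j)\to S_{jk},\, v\mapsto vs_2$ and $\Ot(S_k)\to S_{jk},\, u'\mapsto s_2u'$, both furnished by Lemma~\ref{lem:32} applied to $s_2$ and $s_2^*$, compose to produce such a $\phi$ that fixes the identity.

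Once $s_2 u = \phi(u)\,s_2$ is in hand, I can compute $\sigma_{s_2}\sigma_u\sigma_{s_2}^* = \sigma_{\phi(u)}\sigma_{s_2}\sigma_{s_2}^* = p\sigma_{\phi(u)} + \sum_{v\in S_j\setminus\Ot(S_j)}\sigma_v$, where the last equality uses the characteristic property of $C_p\wr C_p$ that $\sigma_t\sigma_v = \sigma_v$ whenever $t$ is thin and $v$ is non-thin. Pairing against $\sigma_{s_1}^*\sigma_{s_1} = p\sigma_{1_{\Omega_j}} + \sum_v\sigma_v$, the thin cross-terms vanish since $\phi(u)\neq 1_{\Omega_j}$, and the remaining non-thin contribution is $(p-1)p$. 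Equating with the direct expansion and substituting $u \leftarrow u^{-1}$ then produces $\sum_t a_t a_{tu} = p - 1$ for every $u \in \Ot(S_k)\setminus\{1_{\Omega_k}\}$, completing the plan.
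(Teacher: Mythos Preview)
Your argument is correct and follows the same underlying idea as the paper: both compute, in effect, $\sigma_{s_2}^\ast\sigma_{s_1}^\ast\sigma_{s_1}\sigma_{s_2}$ in two ways using Lemma~\ref{lem:32} and compare the coefficients along $\Ot(S_k)$. The paper expands the whole product $\sigma_{s_2}^\ast(\sigma_{s_1}^\ast\sigma_{s_1})\sigma_{s_2}$ at once and then reads off the coefficients of each thin $\sigma_u$, whereas you extract those coefficients one at a time as inner products and evaluate them on the $\Omega_j$-side via the interchange $s_2u=\phi(u)s_2$ together with the $C_p\wr C_p$ identity $\sigma_t\sigma_v=\sigma_v$ for thin $t$ and non-thin $v$. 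This extra structural observation makes each step more transparent (and avoids the unexplained ``expanding in two ways'' in the paper), but the two routes are equivalent and yield the same three identities.
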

\begin{proof}
Since $s_1s_2\subseteq S_{ij}=s_3\Ot(S_k)$,
$\sigma_{s_1}\sigma_{s_2}=\sum_{t\in \Ot(S_k)}a_t\sigma_{s_3t}$
for some non-negative integers $a_t$.
Since $\sigma_{s_3t}=\sigma_{s_3}\sigma_t$ and 
\[p^2=n_{s_1}n_{s_2}=\sum_{t\in \Ot(S_k)}a_tn_{s_3t}=p\sum_{t\Ot(S_j)}a_t,\]
it remains to show the last two equalities on $a_t$ with $t\in \Ot(S_j)$.
Expanding $\sigma_{s_2}^\ast\sigma_{s_1}^\ast\sigma_{s_1}\sigma_{s_2}$ by two ways we obtain from Lemma~\ref{lem:32} that
\[(2p^2-p)\sigma_{1_{\Omega_j}}+(p^2-p)\sum_{t\in \Ot(S_j)\setminus\{1_{\Omega_j}\}}\sigma_t+(p^2-2p)\sum_{u\in S_j\setminus \Ot(S_j)}\sigma_u\]
\[
=\sum_{t\in \Ot(S_k)}a_t\sigma_t^\ast\sigma_{s_3}^\ast\sigma_{s_3}\sum_{t\in \Ot(S_k)}a_t\sigma_t.\]
Therefore,  we conclude from Lemma~\ref{lem:32} that
$p\sum_{t\in \Ot(S_k)}a_t^2=2p^2-p$ and $p\sum_{t\in\Ot(S_k)}a_{t}a_{tu}=p^2-p$ for each $u\in \Ot(S_k)$ with $u\ne 1_{\Omega_k}$.

\end{proof}

For the remainder of this section we assume that
\[S=N.\]
For $i=1,2,\ldots, m$ we take $\alpha_i\in \Omega_i$ and we define $t_i\in S_{i}$ such that
$t_1\in \Ot(S_{1})\setminus \{1_{\Omega_1}\}$, and for $i=2,3,\ldots,m$,
$t_i$ is a unique element in $\Ot(S_{i})$ with $r(\alpha_1t_1,\alpha_it_i)=r(\alpha_1,\alpha_i)$.
Then $C_p$ acts semi-regularly on $\Omega$ such that
\[ \Omega\times C_p\to \Omega, (\beta_i,t^j)\mapsto \beta_i t_i^j,\]
where $C_p=\gn{t}$ and $\beta_i$ is an arbitrary element in $\Omega_i$.

\begin{lem}\label{lem:37}
The  above action induces a semiregular automorphism of $(\Omega,S)$.
\end{lem}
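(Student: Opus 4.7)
The plan is to prove that the map $\phi\colon\Omega\to\Omega$ defined by $\phi(\beta)=\beta t_i$ for $\beta\in\Omega_i$ is an automorphism of $(\Omega,S)$. Since $\phi$ corresponds to the generator $t\in C_p$, this gives the full $C_p$-action; semi-regularity is immediate because each $t_i\ne 1_{\Omega_i}$ lies in the abelian thin group $\Ot(S_i)\simeq C_p$ and already acts without fixed points on $\Omega_i$. Within a single fiber, $\phi|_{\Omega_i}$ is right multiplication by the thin element $t_i$, hence an automorphism of $(\Omega_i,S_i)$. For $\alpha\in\Omega_i$, $\beta\in\Omega_j$ with $i\ne j$ and $s:=r(\alpha,\beta)$, the thinness of $t_i,t_j$ forces $r(\phi(\alpha),\phi(\beta))=t_i^\ast s t_j$ as a single element of $S_{ij}$. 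Thus the lemma reduces to showing that the bijection $\Phi_{ij}\colon S_{ij}\to S_{ij}$, $s\mapsto t_i^\ast s t_j$, is the identity for every $i\ne j$.

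First I would dispose of the case $i=1$ (the case $j=1$ follows by taking $^\ast$). By construction, $s_0:=r(\alpha_1,\alpha_j)$ is a fixed point of $\Phi_{1j}$. Since $t_1$ and $t_j$ each have order $p$ in their thin groups, $\Phi_{1j}$ has order dividing $p$. From $|\Omega_j|=p^2$ and every $n_s=p$ one reads off $|S_{1j}|=p$; any permutation of a $p$-element set whose order divides the prime $p$ and fixes a point must be the identity, so $\Phi_{1j}=\mathrm{id}$. Rewriting, this yields the key identity $s_1 t_i^\ast=t_1^\ast s_1$ for every $s_1\in S_{1i}$ and, by $^\ast$, $t_1^\ast s_3=s_3 t_j^\ast$ for every $s_3\in S_{1j}$.

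For the general case $i,j\ne 1$, $i\ne j$, and a given $s_2\in S_{ij}$, I would pick any $s_1\in S_{1i}$ and any $s_3\in s_1 s_2\subseteq S_{1j}$, and invoke Lemma~\ref{lem:36} to write $\sigma_{s_1}\sigma_{s_2}=\sum_{t\in\Ot(S_j)}a_t\sigma_{s_3 t}$ with $\sum a_t=p$ and $\sum a_t^2=2p-1$. Combining the identities from the base case with the commutativity of $\Ot(S_j)$, a short manipulation gives $\sigma_{s_1}\sigma_{\Phi_{ij}(s_2)}=\sigma_{s_1}\sigma_{s_2}$. If $\Phi_{ij}\ne\mathrm{id}$, the same prime-order argument forces $\Phi_{ij}$ to act freely as a single $p$-cycle on $S_{ij}$, so iterating the previous equality across the orbit and summing gives $p\,\sigma_{s_1}\sigma_{s_2}=\sigma_{s_1}\sum_{s\in S_{ij}}\sigma_s=p J_{1j}$, where $J_{1j}$ is the all-ones matrix on $\Omega_1\times\Omega_j$. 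Hence $\sigma_{s_1}\sigma_{s_2}=\sum_{s\in S_{1j}}\sigma_s$, and since $t\mapsto s_3 t$ bijects $\Ot(S_j)$ with $S_{1j}$, matching coefficients forces $a_t=1$ for every $t$, so $\sum a_t^2=p$, contradicting $\sum a_t^2=2p-1$.

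The main obstacle is this general case: the naive triangle-completion through $\alpha_1$ only places $\Phi_{ij}(s_2)$ inside the complex product $s_1^\ast s_3$, which can contain up to $p$ elements, and there is no direct way to shrink it to $\{s_2\}$. The quantitative identity $\sum a_t^2=2p-1$ of Lemma~\ref{lem:36} is the decisive input that rules out the flat distribution $a_t\equiv 1$ produced by a free $\Phi_{ij}$-orbit, and thereby collapses $\Phi_{ij}$ to the identity.
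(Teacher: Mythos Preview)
Your proof is correct and follows the same overall strategy as the paper: first settle the case $i=1$ (or $j=1$) by the fixed-point plus prime-order argument, then handle general $i,j\ge 2$ by routing through $\Omega_1$ and invoking Lemma~\ref{lem:36}. The only difference is in how the final contradiction is extracted. The paper observes directly that the complex product $r(\alpha_i,\alpha_1)r(\alpha_1,\alpha_j)$ is a \emph{proper} nonempty subset of $S_{ij}$ (this is what Lemma~\ref{lem:36} says), and since this set is $\Phi_{ij}$-invariant (by the base case), $\Phi_{ij}$ cannot be a $p$-cycle on the $p$-element set $S_{ij}$; hence every $s$ is determined by its membership pattern across the $\Phi$-invariant sets $t_i^k\,r(\alpha_i,\alpha_1)r(\alpha_1,\alpha_j)$ and is therefore fixed. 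You instead upgrade the set-level invariance to the matrix identity $\sigma_{s_1}\sigma_{\Phi_{ij}(s_2)}=\sigma_{s_1}\sigma_{s_2}$, average over a putative free orbit to force $a_t\equiv 1$, and kill this with $\sum a_t^2=2p-1$. Your route is a bit longer but more explicit; the paper's is terser but relies on the reader unpacking why ``$s$ is the intersection of some of the $P_k$'' pins $s$ down. Either way, the decisive input is the same consequence of Lemma~\ref{lem:36}: the product $s_1s_2$ is not all of $S_{1k}$. One tiny remark: your phrase ``by $^\ast$'' before the identity $t_1^\ast s_3=s_3 t_j^\ast$ is slightly misleading---that identity is just the base case applied to $S_{1j}$ rather than a consequence of taking adjoints.
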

\begin{proof}
Since $C_p$ acts regularly on each of geometric coset of $\Ot(S_i)$ for $i=1,2\ldots, m$,
the action is semi-regular on $\Omega$.
By the definition of $\{t_i\}$, it is straightforward to
show that  $r(\alpha_1,\alpha_i)$ is fixed by the action on $\Omega\times \Omega$. It follows that
each element of $\bigcup_{j=2}^mS_{1j}\cup S_{j1}$ is also fixed since $S_{1j}=\Ot(S_1)r(\alpha_1,\alpha_j)$.
Let $s\in S_{ij}$with $2\leq i,j$.
Notice that $r(\alpha_i,\alpha_1)r(\alpha_1,\alpha_j)$ is a proper subset of $S_{ij}$ by Lemma~\ref{lem:36}.
This implies that $s$ is obtained as the intersection of some of  $t_i^kr(\alpha_i,\alpha_1)r(\alpha_1,\alpha_j)$
with $0\leq k\leq p-1$, and hence $s$ is fixed.
\end{proof}
For each $i=1,2,\ldots, m$ we take $\{\alpha_{ik} \in \Omega_i \mid  k=1,2,\ldots, p \}$
{to be a complete set of representatives with respect to the equivalence relation  $\bigcup_{t\in\Ot(S_i)}t$ on $\Omega_i$.}

\begin{lem}\label{lem:38}
For each $s\in S_{ij}$ with $i\ne j$ and all $k,l=1,2,\ldots, p$
there exists a unique $h(s)_{kl}\in \mathbb{Z}_p$ such that
$r(\alpha_{ik},\alpha_{jl}t^{h(s)_{kl}})=s$.
Moreover, if $s_1\in S_{ij}$ and $t^a\in \Ot(S_k)$ with $s_1=st^a$,
then $h(s_1)_{kl}=h(s)_{kl}+a$ for all $k,l=1,2,\ldots, m$.
\end{lem}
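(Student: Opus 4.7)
The plan is to use Lemma~\ref{lem:32} to distribute the $p$ elements of $\alpha_{ik}s$ one per $\Ot(S_j)$-equivalence class in $\Omega_j$, and then name the exponents $h(s)_{kl}$ via the semiregular action from Lemma~\ref{lem:37}.

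First, since $S = N$ and $s \in S_{ij}$ with $i \ne j$ lies outside $\bigcup_i S_i$, $s$ is non-regular. By Lemma~\ref{lem:32},
\[\sigma_{s^\ast}\sigma_s = p\sigma_{1_{\Omega_j}} + \sum_{u \in S_j \setminus \Ot(S_j)} \sigma_u,\]
so no element of $\Ot(S_j) \setminus \{1_{\Omega_j}\}$ appears in $s^\ast s$. Fixing $k$, the set $\alpha_{ik}s$ has cardinality $n_s = p$, and any two distinct $\beta_1, \beta_2 \in \alpha_{ik}s$ satisfy $r(\beta_1,\beta_2) \in s^\ast s \subseteq \{1_{\Omega_j}\}\cup (S_j\setminus \Ot(S_j))$, so they lie in distinct equivalence classes of $\bigcup_{t \in \Ot(S_j)} t$. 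Since there are exactly $p$ such classes, namely $\alpha_{jl}\Ot(S_j)$ for $l = 1, \ldots, p$, the set $\alpha_{ik}s$ contains exactly one element of each class. By Lemma~\ref{lem:37}, $\alpha_{jl}\Ot(S_j) = \{\alpha_{jl}t^h \mid h \in \mathbb{Z}_p\}$ with the $p$ elements distinct by semiregularity, so the unique element of $\alpha_{ik}s$ in this class has the form $\alpha_{jl}t^{h(s)_{kl}}$ for a uniquely determined $h(s)_{kl} \in \mathbb{Z}_p$, giving the first assertion.

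For the second assertion, interpret $t^a$ as $t_j^a$ (the relevant element of $\Ot(S_j)$ under the identification fixed before Lemma~\ref{lem:37}). By Lemma~\ref{lem:int}(iii), $|st^a| \le \gcd(n_s, n_{t^a}) = 1$, so the complex product $st^a$ is a single basis relation, necessarily $s_1 \in S_{ij}$. Set $\beta := \alpha_{jl}t^{h(s)_{kl}}$, so $r(\alpha_{ik},\beta) = s$, and set $\gamma := \beta t^a = \alpha_{jl}t^{h(s)_{kl}+a}$. Since $t^a$ acts semiregularly on $\Omega_j$ via $t_j^a \in \Ot(S_j)$, we have $r(\beta,\gamma) = t^a$, and hence $r(\alpha_{ik},\gamma) \in st^a = \{s_1\}$. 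Comparing with $r(\alpha_{ik},\alpha_{jl}t^{h(s_1)_{kl}}) = s_1$ and using the uniqueness from the first part, we conclude $h(s_1)_{kl} = h(s)_{kl} + a$.

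No serious obstacle is expected; the argument is essentially a bookkeeping exercise once Lemma~\ref{lem:32} has shown that $\alpha_{ik}s$ is a transversal for the $\Ot(S_j)$-cosets. The main point of care is keeping the semiregular $C_p$ action coherent across fibers, which is precisely what Lemma~\ref{lem:37} together with the definition of the $t_i$ arranges.
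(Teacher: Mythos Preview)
Your proof is correct and follows essentially the same idea as the paper's. The paper's one-line argument for the first part phrases it dually---invoking the regular right action of $\Ot(S_j)$ on $S_{ij}$ rather than your transversal property of $\alpha_{ik}s$---but both rest on the same consequence of Lemma~\ref{lem:32}, and the second part is identical in substance.
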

\begin{proof}
Since $\Ot(S_j)$ acts regularly on $S_{ij}$ by its right multiplication,
the first statement holds.
For the second statement, here,\[s_1 = s t^a \implies r(\alpha_{ik},\alpha_{jl}t^{h(s_1)_{kl}}) =  r(\alpha_{ik},\alpha_{jl}t^{h(s)_{kl}})t^a \]
\[ \hspace*{1.5cm}	\implies r(\alpha_{ik},\alpha_{jl}t^{h(s_1)_{kl}}) = r(\alpha_{ik},\alpha_{jl}t^{h(s)_{kl}+a}) \]
This implies $ h(s_1)_{kl}=h(s)_{kl}+a $.
\end{proof}

\begin{lem}\label{lem:39}
For each $s\in S_{ij}$ with $i\ne j$ and all $k,l=1,2,\ldots p$
we have
\[s\cap(\alpha_{ik}\Ot(S_i)\times \alpha_{jl}\Ot(S_j))=\{(\alpha_{ik} t_i^a,\alpha_{jl}t_j^b)\mid
b-a=h(s)_{kl}\}.\]
\end{lem}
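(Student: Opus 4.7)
The plan is to exploit Lemma~\ref{lem:37} and Lemma~\ref{lem:38} in tandem: the former supplies a semiregular automorphism of $(\Omega,S)$ induced by the diagonal $C_p$-action $\beta_i\mapsto\beta_i t_i$, which fixes every basis relation setwise, while the latter picks out the distinguished shift $h(s)_{kl}\in\mathbb{Z}_p$ characterizing $s$ on the chosen representatives $\alpha_{ik},\alpha_{jl}$. The whole statement then reduces to translating that one distinguished pair along the $C_p$-orbit.

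For the inclusion $\supseteq$, I would start from the pair $(\alpha_{ik},\alpha_{jl}t_j^{h(s)_{kl}})\in s$ guaranteed by Lemma~\ref{lem:38} and push it by the automorphism $t^a$ from Lemma~\ref{lem:37}. Since $t^a$ acts on $\Omega_i$ as $t_i^a$ and on $\Omega_j$ as $t_j^a$, this produces
\[
(\alpha_{ik}t_i^a,\alpha_{jl}t_j^{h(s)_{kl}+a})\in s,
\]
and setting $b=h(s)_{kl}+a$ sweeps out every pair listed on the right-hand side; note that $\alpha_{ik}t_i^a\in\alpha_{ik}\Ot(S_i)$ and $\alpha_{jl}t_j^b\in\alpha_{jl}\Ot(S_j)$ because $\Ot(S_i)=\langle t_i\rangle$ and $\Ot(S_j)=\langle t_j\rangle$ have order $p$.

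For the reverse inclusion $\subseteq$, suppose $(\alpha_{ik}t_i^a,\alpha_{jl}t_j^b)$ lies in $s\cap(\alpha_{ik}\Ot(S_i)\times\alpha_{jl}\Ot(S_j))$. Applying the automorphism $t^{-a}$ yields $(\alpha_{ik},\alpha_{jl}t_j^{b-a})\in s$, and the uniqueness clause of Lemma~\ref{lem:38} forces $b-a=h(s)_{kl}$.

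I do not anticipate a genuine obstacle: the proof is essentially a two-line transport argument once Lemma~\ref{lem:37} and Lemma~\ref{lem:38} are in place. The only point worth checking for consistency is the count, namely that both sides have exactly $p$ elements, which matches the decomposition of the $p^2$-element block $\alpha_{ik}\Ot(S_i)\times\alpha_{jl}\Ot(S_j)$ into the $|S_{ij}|=p$ basis relations of $S_{ij}$ each contributing $p$ pairs.
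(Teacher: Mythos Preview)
Your argument is correct and matches the paper's proof in substance. The paper phrases the key step as the algebraic identity $r(\alpha_{ik}t_i^a,\alpha_{jl}t_j^b)=(t_i^a)^\ast r(\alpha_{ik},\alpha_{jl})t_j^b=r(\alpha_{ik},\alpha_{jl})t_j^{b-a}$ via complex products with thin relations, rather than explicitly invoking the automorphism of Lemma~\ref{lem:37}, but this identity \emph{is} the automorphism statement and the two presentations are interchangeable.
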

\begin{proof}
Notice that
\[r(\alpha_{ik}t_i^a,\alpha_{jl}t_j^b)=(t_i^a)^\ast r(\alpha_{ik},\alpha_{jl})t_j^b=r(\alpha_{ik},\alpha_{jl})t_j^{b-a}.\]
Since $r(\alpha_{ik},\alpha_{jl}t^{h(s)_{kl}})=s$ by Lemma~\ref{lem:38},
it follows that $r(\alpha_{ik}t_i^a,\alpha_{jl}t_j^b)=s$ if and only if $b-a=h(s)_{kl}$.
\end{proof}

\begin{prop}\label{prop:31}
For each $s\in S_{ij}$ with $i\ne j$ the matrix $(h(s)_{kl})\in M_{p\times p}(\mathbb{Z}_p)$
is such that, for all distinct $k_1,k_2\in \{1,2,\ldots, p\}$,
\[\{h(s)_{k_1,l}-h(s)_{k_2,l}\mid l=1,2,\ldots, p\}=\mathbb{Z}_p.\]
In other word the matrix is a generalized Hadamard matrix of degree $p$ over $\mathbb{Z}_p$,
equivalently, the matrix $(\xi^{h(s)_{kl}})\in M_{p\times p}(\mathbb{C})$ is a complex Hadamard matrix of Butson type $(p,p)$ where $\xi$ is a primitive $p$-th root of unity.
\end{prop}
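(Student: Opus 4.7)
My plan is to count the common $s$-successors in $\Omega_j$ of pairs of points in $\Omega_i$ in two different ways, equating entries of $\sigma_s\sigma_{s^\ast}$. More precisely, fix $s\in S_{ij}$ with $i\ne j$, distinct $k_1,k_2\in\{1,\ldots,p\}$, and free parameters $a_1,a_2\in\mathbb{Z}_p$; I will compute the $(\alpha_{ik_1}t_i^{a_1},\alpha_{ik_2}t_i^{a_2})$-entry of $\sigma_s\sigma_{s^\ast}$ two ways. Letting $d:=a_2-a_1$ sweep through $\mathbb{Z}_p$ will then deliver the Hadamard condition in one step.

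For the first evaluation, the standing assumption $S=N$ combined with Lemma~\ref{lem:35} forces every $s\in S_{ij}$ with $i\ne j$ to be non-regular, so Lemma~\ref{lem:32} applies and gives
\[
\sigma_s\sigma_{s^\ast}=p\sigma_{1_{\Omega_i}}+\sum_{u\in S_i\setminus \Ot(S_i)}\sigma_u.
\]
Because $k_1\ne k_2$ puts $\alpha_{ik_1}$ and $\alpha_{ik_2}$ in distinct $\Ot(S_i)$-cosets, and because the semi-regular automorphism of Lemma~\ref{lem:37} preserves these cosets, $r(\alpha_{ik_1}t_i^{a_1},\alpha_{ik_2}t_i^{a_2})\in S_i\setminus\Ot(S_i)$; hence the target entry equals $1$.

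For the second evaluation, Lemma~\ref{lem:39} identifies the unique $s$-successor of $\alpha_{ik_r}t_i^{a_r}$ inside the $\Ot(S_j)$-coset $\alpha_{jl}\Ot(S_j)$ as the point $\alpha_{jl}t_j^{a_r+h(s)_{k_r,l}}$. Thus a point $\alpha_{jl}t_j^b$ is a common $s$-successor of $\alpha_{ik_1}t_i^{a_1}$ and $\alpha_{ik_2}t_i^{a_2}$ precisely when $a_1+h(s)_{k_1,l}=b=a_2+h(s)_{k_2,l}$, so the total count equals $|\{l:h(s)_{k_1,l}-h(s)_{k_2,l}=a_2-a_1\}|$. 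Equating this with the first count and letting $d=a_2-a_1$ range over $\mathbb{Z}_p$ yields $|\{l:h(s)_{k_1,l}-h(s)_{k_2,l}=d\}|=1$ for every $d\in\mathbb{Z}_p$, which is exactly the desired property. The reformulation in terms of a Butson-type $(p,p)$ complex Hadamard matrix is then the standard identification $\mathbb{Z}_p\leftrightarrow \mu_p$, $d\mapsto \xi^d$.

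I do not expect a serious obstacle here: the argument is a short double count. The only delicate point is to remember that the assumption $S=N$ rules out regular cross-fiber relations, so that Lemma~\ref{lem:32} (and not Lemma~\ref{lem:31}) controls $\sigma_s\sigma_{s^\ast}$; the coefficient $1$ in front of $\sigma_u$ there is what makes the count collapse to exactly one $l$ per difference $d$, rather than giving $0$ or $p$.
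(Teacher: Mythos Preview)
Your proof is correct and is essentially the same argument as the paper's: both count common $s$-successors of two points $\alpha_{ik_1}t_i^{a_1}$ and $\alpha_{ik_2}t_i^{a_2}$ lying in distinct $\Ot(S_i)$-cosets, use Lemma~\ref{lem:32} to see that this count equals $1$, and use Lemma~\ref{lem:39} to rewrite the count as $|\{l:h(s)_{k_1,l}-h(s)_{k_2,l}=a_2-a_1\}|$. The only cosmetic difference is that you package the count as an entry of $\sigma_s\sigma_{s^\ast}$, whereas the paper writes the same set of common successors explicitly; your remark that $S=N$ forces cross-fiber $s$ to be non-regular (so that Lemma~\ref{lem:32} rather than Lemma~\ref{lem:31} applies) is exactly the point the paper is using implicitly.
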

\begin{proof}
Notice that, for all distinct $k,l$, by Lemma~\ref{lem:39},
\[\{\gamma\in \Omega\mid r(\alpha_{ik}t_i^a,\gamma)=r(\alpha_{il}t_i^b,\gamma)=s\}\]
equals
\[\bigcup_{r=1}^p\{\alpha_{jr}t_j^c\mid c-a=h(s)_{kr}, c-b=h(s)_{lr}\}.\]
Since the upper one is a singleton by Lemma~\ref{lem:32},
there exists a unique $r\in \{1,2,\ldots, p\}$ such that $b-a=h(s)_{kr}-h(s)_{lr}$.
Since $a$ and $b$ are arbitrarily taken, the first statement holds.

The second statement
holds since $\sum_{i=0}^{p-1}x^i$ is the minimal polynomial of $\xi$ over $\mathbb{Q}$.
\end{proof}

We shall write the matrix $(\xi^{h(s)_{kl}})$ as $H(s)$. For $s\in S_{ij}$ with $i\ne j$,
the restriction of $\sigma_s$ to $\Omega_i\times \Omega_j$ can be viewed as 
a $(p\times p)$-matrix whose $(k,l)$-entry is the matrix $P_i^{h(s)_{kl}}$ where $P_i$ is the permutation matrix corresponding to the mapping $\beta_i\mapsto \beta_it_i$. We may assume that $P_i=P_j$, say $P$,  for all $i,j=1,2,\ldots, m$ by Lemma~\ref{lem:37}.
Notice that $H(s)$ is obtained from $(P^{h(s)_{kl}})$ by sending $P^{h(s)_{kl}}$ to $\xi^{h(s)_{kl}}$.
\begin{prop}\label{prop:32}
For all $s_1\in S_{ij}$, $s_2\in S_{jk}$ and $s_3\in S_{ik}$ with distinct $i,j,k$
we have $H(s_1)H(s_2)=\alpha H(s_3)$ for some $\alpha\in \mathbb{C}$ with $|\alpha|=\sqrt{p}$.
\end{prop}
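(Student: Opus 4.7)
The plan is to transport the identity from Lemma~\ref{lem:36} through the substitution $P\mapsto \xi$ and then read $|\alpha|^2$ off the numerical data that lemma supplies.

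Writing $\Ot(S_k)=\gn{t_k}$ and $b_a:=a_{t_k^a}$, the first step is to note that the restriction of $\sigma_{t_k^a}$ to $\Omega_k\times\Omega_k$ is the block-diagonal matrix whose diagonal blocks are all equal to $P^a$. Combined with the block-matrix form of $\sigma_{s_1}\sigma_{s_2}$ on $\Omega_i\times\Omega_k$ (the intermediate sum is forced onto $\Omega_j$ by vanishing), the identity of Lemma~\ref{lem:36} becomes
\[\sum_{r=1}^p P^{h(s_1)_{k,r}+h(s_2)_{r,l}} \;=\; \sum_{a=0}^{p-1}b_a\, P^{h(s_3)_{k,l}+a} \qquad (1\le k,l\le p).\]
Since $I,P,\ldots,P^{p-1}$ are distinct permutation matrices and hence linearly independent in $M_p(\mathbb{C})$, the coefficient sequences on the two sides agree, so the same identity survives the substitution $P\mapsto \xi$. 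The right-hand side then factors as $\xi^{h(s_3)_{k,l}}\sum_{a}b_a\xi^a$, yielding $H(s_1)H(s_2)=\alpha\,H(s_3)$ with $\alpha:=\sum_{a=0}^{p-1}b_a\xi^a$.

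To finish, I would expand $|\alpha|^2=\alpha\overline{\alpha}$ and group by the exponent of $\xi$:
\[|\alpha|^2 \;=\; \sum_a b_a^2 \;+\; \sum_{d=1}^{p-1}\Bigl(\sum_a b_a b_{a+d}\Bigr)\xi^d.\]
Lemma~\ref{lem:36} provides $\sum_a b_a^2=2p-1$ and $\sum_a b_a b_{a+d}=p-1$ for each nonzero $d\in\mathbb{Z}_p$; together with $\sum_{d=1}^{p-1}\xi^d=-1$ this gives $|\alpha|^2=(2p-1)-(p-1)=p$, as required.

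I do not foresee a genuine obstacle. The only step demanding even a moment's care is the block-to-scalar dictionary $P\mapsto\xi$, and that is immediate once one records that $\sigma_{t_k^a}|_{\Omega_k\times\Omega_k}$ is the block-diagonal matrix described above; everything else is a short cyclotomic calculation that falls out of Lemma~\ref{lem:36} verbatim.
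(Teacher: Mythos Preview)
Your argument is correct and is essentially the paper's own proof: both pass from Lemma~\ref{lem:36} to the scalar identity $H(s_1)H(s_2)=\alpha H(s_3)$ via the ring map $P\mapsto\xi$, and both compute $|\alpha|^2$ from the quadratic relations $\sum_a b_a^2=2p-1$ and $\sum_a b_ab_{a+d}=p-1$ together with $\sum_{d=1}^{p-1}\xi^d=-1$. The only difference is cosmetic---you spell out the block identity and the linear independence of $I,P,\ldots,P^{p-1}$ that licenses the substitution, whereas the paper leaves this implicit in the paragraph preceding the proposition; just be mindful that you have overloaded $k$ as both a fiber index and a block row index.
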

\begin{proof}
By Lemma~\ref{lem:36},
$H(s_1)H(s_2)=H(s_3)(\sum_{i=0}^{p-1}a_i\xi^i)$ for some $a_i\in \mathbb{Z}$
where $a_i=c_{t_k^i}$. Thus, it suffices to show that
$|(\sum_{i=0}^{p-1}a_i\xi^i)|^2=p$.
By Lemma~\ref{lem:36},
the left hand side equals
\[\sum_{i=0}^{p-1}\sum_{j=0}^{p-1}a_ia_j\xi^{i-j}=
\sum_{i=0}^{p-1}a_i^2+\sum_{i=1}^{p-1}\sum_{j=0}^{p-1}a_ja_{i+j}\xi^i
=(2p-1)+(p-1)(-1)=p.\]
\end{proof}

\begin{cor}\label{cor:31}
Let $s_i:=r(\alpha_1,\alpha_i)$ for $i=2,3,\ldots,m$ and $\mathbf{B}_i$ denote the basis for $ \mathbb{C}^p $ consisting of
the rows of $H(s_i)$ $i=2,3,\ldots,m$, and $\mathbf{B}_1$ be the standard basis.
Then $\{\mathbf{B}_1,\mathbf{B}_2,\ldots, \mathbf{B}_m\}$ is a mutually unbiased bases for $\mathbb{C}^p$,
and $m\leq p+1$.
\end{cor}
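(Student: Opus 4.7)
The plan is to verify the statement in two steps: first that $\{\mathbf{B}_1,\ldots,\mathbf{B}_m\}$ really is a family of mutually unbiased bases (MUB) in $\mathbb{C}^p$, and then to deduce $m\leq p+1$ from the classical MUB bound. Each $\mathbf{B}_i$ becomes orthonormal after rescaling by $1/\sqrt{p}$: the case $i=1$ is trivial, and for $i\geq 2$ Proposition~\ref{prop:31} already tells us that $H(s_i)$ is a Butson-type complex Hadamard of parameters $(p,p)$, so $H(s_i)H(s_i)^{\ast}=pI$ by the standard Hadamard identity. Unbiasedness between $\mathbf{B}_1$ and $\mathbf{B}_i$ with $i\geq 2$ is immediate, since the cross inner products are entries of $\frac{1}{\sqrt{p}}H(s_i)$, each of modulus $1/\sqrt{p}$.

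The substantive step is unbiasedness between $\mathbf{B}_i$ and $\mathbf{B}_j$ for distinct $i,j\in\{2,\ldots,m\}$. I would apply Proposition~\ref{prop:32} to the triple of distinct fiber indices $(i,1,j)$ with $s_1:=s_i^{\ast}\in S_{i,1}$ and $s_2:=s_j\in S_{1,j}$, obtaining $H(s_i^{\ast})H(s_j)=\alpha H(s)$ for some $s\in S_{i,j}$ and some $\alpha\in\mathbb{C}$ with $|\alpha|=\sqrt{p}$. A short check via Lemma~\ref{lem:39} supplies the key symmetry $h(s^{\ast})_{kl}=-h(s)_{lk}$, i.e.\ $H(s_i^{\ast})=H(s_i)^{\ast}$; hence every entry of $H(s_i)^{\ast}H(s_j)$ has modulus $\sqrt{p}$, and after dividing by $p$ we obtain the required MUB inner-product magnitude $1/\sqrt{p}$.

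With the MUB property established, the bound $m\leq p+1$ follows from the classical argument: associate to each $u\in\mathbf{B}_i$ the centered projection $|u\rangle\langle u|-\frac{1}{p}I$, living in the $(p^2-1)$-dimensional real space of traceless self-adjoint operators on $\mathbb{C}^p$ under the Hilbert--Schmidt inner product. The MUB condition forces the contributions from distinct $\mathbf{B}_i$ to be mutually orthogonal, while within a single $\mathbf{B}_i$ only $p-1$ of these projections are linearly independent because $\sum_{u\in\mathbf{B}_i}|u\rangle\langle u|=I$. Dimension counting yields $m(p-1)\leq p^2-1=(p-1)(p+1)$, hence $m\leq p+1$. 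The one delicate step is lining up fiber indices so that Proposition~\ref{prop:32} applies cleanly; once $H(s^{\ast})=H(s)^{\ast}$ is noted, the rest is matrix algebra plus the invocation of the classical MUB bound.
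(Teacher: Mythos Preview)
Your approach matches the paper's: establish the MUB property and invoke the classical bound. The paper's own proof is extremely terse — it cites only Proposition~\ref{prop:31} for the MUB claim and a reference for the bound $m\le p+1$ — so your explicit use of Proposition~\ref{prop:32} together with the identity $H(s^{\ast})=H(s)^{\ast}$ (correctly derived from Lemma~\ref{lem:39}) supplies the cross-unbiasedness step the paper leaves implicit, and your self-contained dimension-counting argument for the MUB bound replaces the paper's bare citation.

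One small point to tighten: the product $H(s_i)^{\ast}H(s_j)$ that you compute records the inner products between the \emph{columns} of $H(s_i)$ and the \emph{columns} of $H(s_j)$ (both naturally indexed by the $\Omega_1$-representatives, which is exactly why Proposition~\ref{prop:32} applies with middle fiber $\Omega_1$). The row-row inner products demanded by the statement as literally phrased are the entries of $H(s_i)H(s_j)^{\ast}$, and Proposition~\ref{prop:32} does not cover that product, since $s_i\in S_{1i}$ and $s_j^{\ast}\in S_{j1}$ have mismatched middle fibers. This is harmless for the application — the bound $m\le p+1$ follows equally well once the columns (together with $\mathbf{B}_1$) are shown to be mutually unbiased — but you should flag it: the row inner products actually depend on the arbitrary identification of each $\Omega_i$-representative set with $\{1,\dots,p\}$ and are not canonically controlled by the scheme, so the column reading is the one your argument genuinely proves.
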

\begin{proof}
The first statement is an immediate consequence of Proposition~\ref{prop:31},
and the second statement follows from a well-known fact that the number of mutually
unbiased bases for $\mathbb{C}^n$ is at most $n+1$ (see \cite{BBRF}).
\end{proof}

\begin{flushleft}
(Proof of Theorem~\ref{thm:1})
\end{flushleft}
\begin{proof}
Suppose that $R\ne S$.
Then $N=S$ and the theorem follows from Corollary~\ref{cor:31}.
\end{proof}

\begin{flushleft}
(Proof of Theorem~\ref{main1})
\end{flushleft}
\begin{proof}
Since $n_{\Or(S)}=p^2$ and $\Ot(S)<\Or(S)$, it follows from \cite[Thm.~2.1]{EP} (or see  \cite{MP}) that the thin residue extension of $(\Omega,S)$
is a coherent configuration with all fibers isomorphic to $C_p\wr C_p$ such that
each basic relation out of the fibers has valency $p$.

We claim that $S=N$. Indeed, otherwise, $S=R$, which implies that $\gn{ss^\ast\mid s\in S}$ has valency $p$.
Since $\Or(S)=\gn{ss^\ast\mid s\in S}$ (see \cite{Z1}), it contradicts that $\Or(S)$ has valency $p^2$.

By the claim, $S=N$. Since the number of fibers of the thin residue extension of $(\Omega,S)$ equals
$|\Omega/\Or(S)|$, the theorem follows from Theorem~\ref{thm:1}.
\end{proof}

\begin{flushleft}
(Proof of Corollary~\ref{cor:2})
\end{flushleft}
\begin{proof}
Since $(\Omega,S)$ is a $p$-scheme and $\Or(S)\simeq C_p\times C_p$,
$|\Omega|$ is a power of $p$ greater than $p^2$.
By Theorem~\ref{main1}, $|\Omega|\leq (p+1)p^2$, and hence, $|\Omega|=p^3$.
\end{proof}


\bibliographystyle{amcjoucc}
\bibliography{mybibfile}

\begin{thebibliography}{10}
\expandafter\ifx\csname url\endcsname\relax
  \def\url#1{\texttt{#1}}\fi
\expandafter\ifx\csname urlprefix\endcsname\relax\def\urlprefix{URL }\fi
\expandafter\ifx\csname href\endcsname\relax
  \def\href#1#2{#2} \def\path#1{#1}\fi

\bibitem{BI}
E.~Bannai, T.~Ito,
  \href{https://books.google.co.kr/books?id=bgDvAAAAMAAJ}{Algebraic
  Combinatorics I: Association Schemes}, Benjamin/Cummings Pub. Co., 1984.
\newline\urlprefix\url{https://books.google.co.kr/books?id=bgDvAAAAMAAJ}

\bibitem{BCN}
A.~Brouwer, A.~Cohen, A.~Neumaier, Distance-regular graphs, Vol.~18,
  Springer-Verlag, Berlin, 1989.
\newblock \href {https://doi.org/10.1007/978-3-642-74341-2}
  {\path{doi:10.1007/978-3-642-74341-2}}.

\bibitem{Z1}
P.-H. Zieschang, An algebraic approach to association schemes, Lecture Notes in
  Math. 1628 (1996).
\newblock \href {https://doi.org/10.1007/BFb0097032}
  {\path{doi:10.1007/BFb0097032}}.

\bibitem{Z2}
P.-H. Zieschang, Theory of Association Schemes, Springer Science \& Business
  Media, 2005.
\newblock \href {https://doi.org/10.1007/3-540-30593-9}
  {\path{doi:10.1007/3-540-30593-9}}.

\bibitem{CHK}
J.~R. Cho, M.~Hirasaka, K.~Kim, On $p$-schemes of order $p^3$, Journal of
  Algebra 369 (2012) 369--380.
\newblock \href {https://doi.org/10.1016/j.jalgebra.2012.06.026}
  {\path{doi:10.1016/j.jalgebra.2012.06.026}}.

\bibitem{HZ}
M.~Hirasaka, P.-H. Zieschang, Sufficient conditions for a scheme to originate
  from a group, Journal of Combinatorial Theory, Series A 104~(1) (2003)
  17--27.
\newblock \href {https://doi.org/10.1016/S0097-3165(03)00104-3}
  {\path{doi:10.1016/S0097-3165(03)00104-3}}.

\bibitem{HM}
A.~Hanaki, I.~Miyamoto,
  \href{http://math.shinshu-u.ac.jp/~hanaki/as/}{Classification of association
  schemes of small order, online catalogue}.
\newline\urlprefix\url{http://math.shinshu-u.ac.jp/~hanaki/as/}

\bibitem{EP}
S.~A. Evdokimov, I.~N. Ponomarenko, Schemes of relations of the finite
  projective plane, and their extensions, Algebra i Analiz 21~(1) (2009)
  90--132, [translation in \textit{St. Petersburg Math. J.} 21 (2010), no. 1,
  65–-93].
\newblock \href {https://doi.org/10.1090/S1061-0022-09-01086-3}
  {\path{doi:10.1090/S1061-0022-09-01086-3}}.

\bibitem{MP}
M.~Muzychuk, I.~Ponomarenko, On quasi-thin association schemes, Journal of
  Algebra 351~(1) (2012) 467--489.
\newblock \href {https://doi.org/10.1016/j.jalgebra.2011.11.012}
  {\path{doi:10.1016/j.jalgebra.2011.11.012}}.

\bibitem{chen2019coherent}
G.~Chen, I.~Ponomarenko, \href{http://www.pdmi.ras.ru/~inp/}{Coherent
  configurations}, Central China Normal University Press, Wuhan (2019).
\newline\urlprefix\url{http://www.pdmi.ras.ru/~inp/}

\bibitem{yoshikawa2016association}
M.~Yoshikawa, On association schemes of finite exponent, European Journal of
  Combinatorics 51 (2016) 433--442.
\newblock \href {https://doi.org/10.1016/j.ejc.2015.07.019}
  {\path{doi:10.1016/j.ejc.2015.07.019}}.

\bibitem{BBRF}
S.~Bandyopadhyay, P.~Oscar~Boykin, C.~Vwani, V.~Farrokh, A new proof for the
  existence of mutually unbiased bases, Algorithmica 34~(4) (2002) 512--528.
\newblock \href {https://doi.org/10.1007/s00453-002-0980-7}
  {\path{doi:10.1007/s00453-002-0980-7}}.

\end{thebibliography}

%
%
%
%

\end{document}